\def\E{\mathbb{E}}
\def\R{\mathbb{R}}
\def\O{\mathcal O}
\def\V{{\mathcal V}}
\def\F{{\mathcal F}}
\def\I{{\mathcal I}}
\def\1{{\bf 1}}
\def\e{\boldsymbol \epsilon}
\def\bbx{\bar{\mathbf x}}
\def\hbg{\hat{\mathbf g}}
\def\bbg{\bar{\mathbf g}}
\def\tbg{\tilde{\mathbf g}}
\def\bx{{\mathbf x}}
\def\bz{{\mathbf z}}
\def\bg{{\mathbf g}}
\def\bu{{\mathbf u}}
\def\by{{\mathbf y}}
\newtheorem{theorem}{Theorem}
\newtheorem{lemma}{Lemma}
\newtheorem*{lemmapp}{Lemma}
\newtheorem{corollary}{Corollary}
\newtheorem{assumption}{Assumption}
\title{Local SGD With a Communication Overhead Depending Only on the Number of Workers}
\author{%
  Artin Spiridonoff\\
  Division of Systems Engineering\\
  Boston University\\
  Boston, MA 02215 \\
  \texttt{artin@bu.edu} \\
  \And
  Alex Olshevsky \\
  Division of Systems Engineering \\
  Boston University\\
  Boston, MA 02215 \\
  \texttt{alexols@bu.edu} \\
  \AND
  Ioannis Ch. Paschalidis \\
  Division of Systems Engineering \\
  Boston University\\
  Boston, MA 02215 \\
  \texttt{yannisp@bu.edu} \\
}
\begin{document}

\maketitle

\begin{abstract}
	We consider speeding up stochastic gradient descent (SGD) by parallelizing it across multiple workers. We assume the same data set is shared among $n$ workers, who can take SGD steps and coordinate with a central server. Unfortunately, this could require a lot of communication between the workers and the server, which can dramatically reduce the gains from parallelism. The Local SGD method, proposed and analyzed in the earlier literature, suggests machines should make many local steps between such communications. While the initial analysis of Local SGD showed it needs $\Omega ( \sqrt{T} )$ communications for $T$ local gradient steps in order for the error to scale proportionately to $1/(nT)$, this has been successively improved in a string of papers, with the state-of-the-art requiring  $\Omega \left( n \left( \mbox{ polynomial in  log } (T) \right) \right)$ communications. In this paper, we give a new analysis of Local SGD. A consequence of our analysis is that Local SGD can achieve an error that scales as $1/(nT)$ with only a fixed number of communications independent of $T$: specifically, only $\Omega(n)$  communications are required.
\end{abstract}

\section{Introduction}
Stochastic Gradient Descent (SGD) is a widely used algorithm to minimize a convex or non-convex function $F$ in which model parameters are updated iteratively as follows:
\begin{align*}
	\bx^{t+1} = \bx^t - \eta_t \hbg^t,
\end{align*}
where $\hbg^t$ is a stochastic gradient of $F$ at $\bx^t$ and $\eta_t$ is the learning rate.
This algorithm can be naively parallelized by adding more workers independently  to compute a gradient and then average them at each step to reduce the variance in estimation of the true gradient $\nabla F(\bx^t)$ \cite{dekel2012optimal}. This method requires each worker to share their computed gradients with each other at every iteration. 

However, it is widely acknowledged that communication is a major bottleneck of this method for large scale optimization applications \cite{mcmahan2016communication,konevcny2016federated,lin2017deep}.
Often, mini-batch parallel SGD is suggested to address this issue by increasing the computation to communication ratio.
Nonetheless, too large mini-batch size might degrades the performance \cite{lin2018don}. Along the same lines of increasing compute to communication, \textit{local} SGD has been proposed to reduce communications \cite{mcmahan2016communication,dieuleveut2019communication}. In  this method, workers compute (stochastic) gradients and update their parameters locally, and communicate only once in a while to obtain the average of their parameters.
Local SGD improves the communication efficiency not only by reducing the number of communication rounds, but also alleviates the synchronization delay caused by waiting for slow workers and evens out the variations in workers' computing time \cite{wang2018cooperative}.

On the other hand, since individual gradients of each worker are calculated at different points, this method introduces residual error as opposed to fully synchronous SGD. Therefore, there is a trade-off between having fewer communication rounds and introducing additional errors to the gradient estimates.

The idea of making local updates is not new and has been used in practice for a while  \cite{konevcny2016federated}. However, until recently, there have been few successful efforts to analyze Local SGD theoretically and therefore it is not fully understood yet.
The paper \cite{zhang2016parallel} shows that for quadratic functions, when the variance of the noise is higher far from the optimum, frequent averaging leads to faster convergence.
One of the main questions we want to ask is: how many communication rounds are needed for Local SGD to have the \textit{same} convergence rate of a synchronized parallel SGD while achieving performance that linearly improves in the number of workers?

\cite{stich2018local} was among the earlier works that tried to answer this question for general strongly convex and smooth functions and showed that the communication rounds can be reduced up to a factor of $H = \O(\sqrt{T/n})$, without affecting the asymptotic convergence rate (up to constant factors), where $T$ is the total number of iterations and $n$ is number of parallel workers. 

Focusing on smooth and possibly non-convex functions which satisfy  a Polyak-Lojasiewicz condition, \cite{haddadpour2019local} demonstrates that only $R = \Omega((Tn)^{1/3})$ communication rounds are sufficient to achieve asymptotic performance that scales proportionately to $1/n$.

More recently, \cite{khaled2019tighter} and \cite{stich2019error} improve upon the previous works by showing linear-speed up for Local SGD with only $\Omega \left(n \mbox{ poly log }(T) \right)$ communication rounds when data is identically distributed among workers and $F$ is strongly convex. Their works also consider the cases when $F$ is not necessarily strongly-convex as well as the case of data being heterogeneously distributed among workers in \cite{khaled2019tighter}.

\begin{table}
    \caption{Comparison of Similar Works}
    \label{table: comparison}
    \begin{threeparttable}[b]
    \centering
    \def\arraystretch{2}
    \resizebox{\textwidth}{!}{
    \begin{tabular}{| c c c c c |}
    \hline
    \thead{noise\\model} & \thead{$H=T$ \tnote{a}\\convergent} & \thead{communication\\rounds, $R$} & \thead{convergence rate,\\$F(\hat \bx^t) - F^*$} & Reference \\
    \hline
    uniform  & no  & $\Omega(\sqrt{Tn})$ & $\O(\frac{\xi^0}{R^3} + \frac{\sigma^2}{\mu n T} + \frac{\kappa G^2}{\mu R^2})$
    \tnote{b}& \cite{stich2018local} \\
    \thead{uniform with\\strong-growth \tnote{c}} & no & $\Omega((Tn)^{1/3})$ &
    $\O(\frac{\xi^0}{R^3}+\frac{\kappa \sigma^2}{\mu n T} + \frac{\kappa^2 \sigma^2}{\mu n T R})$ & \cite{haddadpour2019local} \\
    \thead{uniform with\\strong-growth} & no & $ \Omega(n*\text{poly-log} (T))$ &
    $\tilde \O(\frac{\kappa n H \xi^0}{\exp(R/(\kappa n))} + \frac{\sigma^2}{\mu n T})$ \tnote{d} & \cite{stich2019error} \\
    uniform & yes & $ \Omega(n*\text{poly-log} (T))$ &
    $\tilde \O( \frac{\kappa \xi^0}{T^2} + \frac{\kappa \sigma^2}{n \mu T} + \frac{\kappa^2 \sigma^2}{\mu T R} )$ & \cite{khaled2019tighter} \\
    \thead{uniform with\\strong-growth} & yes & $\Omega(n)$ &
    $\O(\frac{\kappa^4 \ln(TR^{-2})\xi^0}{T^2} + \frac{\kappa \sigma^2}{\mu n T} + \frac{\kappa^2 \sigma^2}{\mu T R})$ & \textbf{This Paper} \\
    \hline
    \end{tabular}
    }
    \begin{tablenotes}
    \item [a] $H$ is the length of inter-communication intervals.
    \item [b] $G$ is the uniform upper bound assumed for the $l_2$ norm of gradients in the corresponding work.
    \item [c] This noise model is defined in Assumption \ref{as: g}.
    \item [d] $\tilde \O (.)$ ignores the poly-logarithmic and constant factors. 
    \end{tablenotes}
    \end{threeparttable}
\end{table}

In this work, we focus on smooth and strongly-convex functions with a very general noise model. The main contribution of this paper is to propose a communication strategy which requires only $R = \Omega(n)$ communication rounds to achieve performance that scales as $1/n$ in the number of workers. To the best of the authors' knowledge, this is the only work to show this result (without additional poly-logarithmic terms and constants).
Our analysis can also recover some of the best known rates for special cases, e.g., when $H$ is constant, where $H$ is defined as the length of intercommunication intervals.
A summary of our results compared to the available literature can be found in Table \ref{table: comparison}.


The rest of this paper is organized as follows. In the following subsection we outline the related literature and ongoing works. In Section  \ref{sec: Problem} we define the main problem and state our assumptions. We present our theoretical findings in Section \ref{sec: convergence} and the sketch of proofs in Section \ref{sec: sketch of proof}, followed by numerical experiments in Section \ref{sec: Numerical } and conclusion remarks in Section \ref{sec: conclustion}.

\subsection{Related Works}
There has been a lot of effort in the recent research to take into account the communication delays and training time in designing faster algorithms \cite{mcdonald2010distributed,zhang2015deep,bijral2016data,kairouz2019advances}.
See \cite{tang2020communication} for a comprehensive survey of communication efficient distributed training algorithms considering both system-level and algorithm-level optimizations.

 Many works study the communication complexity of distributed methods for convex optimization \cite{arjevani2015communication} \cite{woodworth2020local}  and statistical estimation \cite{zhang2013information}.
\cite{woodworth2020local} presents a rigorous comparison of Local SGD with $H$ local steps and mini-batch SGD with  $H$ times larger mini-batch size and the same number of communication rounds (we will refer to such a method as large mini-batch SGD) and show regimes in which each algorithm performs better: they show that Local SGD is strictly better than large mini-batch SGD when the functions are quadratic. Moreover, they prove a lower bound on the worst case of Local SGD that is higher than the worst-case error of large mini-batch SGD in a certain regime.
\cite{zhang2013information} studies the minimum amount of communication required to achieve centralized minimax-optimal rates by establishing lower bounds on minimax risks for distributed statistical estimation under a communication budget.

A parallel line of work studies the convergence of Local SGD with non-convex functions \cite{zhou2017convergence}.
\cite{yu2019parallel} was among the first works to present provable guarantees of Local SGD with linear speed up.
\cite{wang2018cooperative} and \cite{koloskova2020unified} present unified frameworks for analyzing decentralized SGD with local updates, elastic averaging or changing topology.
The follow-up work \cite{wang2018adaptive} presents ADACOMM, an adaptive communication strategy that starts with infrequent averaging and then increases the communication frequency in order to achieve a low error floor. They analyze the error-runtime trade-off of Local SGD 
with nonconvex functions and propose communication times to achieve faster runtime.

In \textit{One-Shot Averaging} (OSA), workers perform local updates with no communication during the optimization until the end when they average their parameters. This method can be seen as an extreme case of Local SGD with $H=T$, on the opposite end of synchronous SGD \cite{mcdonald2009efficient,zinkevich2010parallelized,zhang2013communication,rosenblatt2016optimality,godichon2017rates}. \cite{dieuleveut2019communication} provides non-asymptotic analysis of mini-batch SGD and one-shot averaging as well as regimes in which mini-batch SGD could outperform one-shot averaging.

Another line of work reduces the communication by compressing the gradients and hence limiting the number of bits transmitted in every message between workers \cite{lin2017deep,alistarh2017qsgd,wangni2018gradient,stich2018sparsified,stich2019error}.

Asynchronous methods have been studied widely due to their advantages over synchronous methods which suffer from synchronization delays due to the slower workers \cite{olshevsky2018robust}.
\cite{wang2019matcha} studies the error-runtime trade-off in decentralized optimization and proposes MATCHA, an algorithm which parallelizes inter-node communication by decomposing the topology into matchings. 
\cite{hendrikx2019accelerated} provides an accelerated stochastic algorithm for decentralized optimization of finite-sum objective functions that by carefully balancing the ratio between communications and computations match the rates of the best known sequential algorithms while having the network scaling of optimal batch algorithms.
However, these methods are relatively more involved and they  often require full knowledge of the network, solving a semi-definite program and/or calculating communication probabilities (schedules).

\subsection{Notation}
For a positive integer $s$, we define $[s]:=\{1,\ldots,s\}$. We use bold letters to represent vectors. We denote vectors of all $0$s and $1$s by $\mathbf{0}$ and $\mathbf{1}$, respectively. We use $\Vert \cdot \Vert$ for the Euclidean  norm.

\section{Problem Formulation}\label{sec: Problem}
Suppose there are $n$ workers  $\V=\{1, \ldots, n\}$, trying to minimize $F:\R^d \rightarrow \R$ in parallel.
We assume all workers have access to $F$ through noisy gradients. In Local SGD, workers perform local gradient steps and occasionally calculate the average of all workers' iterates. 

Having access to the same objective function $F$ is of special interest if the data is stored in one place accessible to all machines or is distributed identically among workers with no memory constraints. We hope that results presented here can be extended to applications with heterogeneous data distributions \cite{khaled2019tighter}.

We will make the following additional assumptions.

\begin{assumption}\label{as: F}
	Function $F:\R^d\rightarrow \R$ is differentiable, $\mu$-strongly convex and $L$-smooth for $L\geq\mu>0$. In particular, 
	\begin{align*}
		\frac{\mu}{2}\Vert \bx - \by \Vert^2 \leq F(\by) - F(\bx) - \langle \nabla F(\bx), \by - \bx \rangle \leq \frac{L}{2}\Vert \bx - \by \Vert^2, \qquad \forall x,y\in \R^d.
	\end{align*}
	We define $\kappa=L/\mu$ to be the condition number of $F$.
\end{assumption}
We make the following assumption on the noise of the stochastic gradients.
\begin{assumption}\label{as: g}
	Each worker $i$ has access to a gradient oracle which returns an unbiased estimate of the true gradient in the form $\hbg_i(\bx) = \nabla F(\bx) + \e_i$, such that $\e_i$ is a zero-mean conditionally independent random noise with its expected squared norm error bounded as
	\begin{align*}
	\E[\e_i] = \mathbf 0, \qquad \E[\Vert \e_i \Vert^2|\bx] \leq c\Vert \nabla F(\bx) \Vert^2 + \sigma^2,
	\end{align*}
	where $\sigma^2,c\geq0$ are constants.
\end{assumption}
To save space, we define $\hbg_i^t := \hbg(\bx_i^t)$ as the stochastic gradient of node $i$ at iteration $t$, and $\bg_i^t = \nabla F(\bx_i^t)$ as the true gradient at the same point.

The noise model of Assumption \ref{as: g} is very general and it includes the common case with uniformly bounded squared norm error when $c=0$. As it is noted by \cite{zhang2016parallel}, the advantage of periodic averaging compared to one-shot averaging only appears when $c/\sigma^2$ is large. Therefore, to study Local SGD, it is important to consider a noise model as in Assumption \ref{as: g} to capture the effects of frequent averaging.
Among the related works mentioned in Table \ref{table: comparison}, only \cite{stich2019error} and \cite{haddadpour2019local} analyze this noise model while the rest study the special case with $c=0$.
SGD under this noise model with $c>0$ and $\sigma^2=0$ was first studied in \cite{schmidt2013fast} under the name \textit{strong-growth condition}. Therefore we refer to the noise model considered in this work as \textit{uniform with strong-growth}.

In Local SGD, each worker $i$ holds a local parameter $\bx_i^t$ at iteration $t$ and a set $\I \subset [T]$ of communication times, and performs the following update:
\begin{align}\label{eq: Local SGD process}
\bx_i^{t+1} = \begin{cases}
x_i^t - \eta_t \hbg_i^t, \qquad &\text{if } t+1 \notin \I,\\
\frac{1}{n}\sum_{j=1}^n (\bx_j^t - \eta_t \hbg_j^t),
\qquad &\text{if } t+1 \in \I.
\end{cases}
\end{align}
When $\I = [T]$, we recover the fully synchronized parallel SGD, while $\I = \{T\}$ recovers one-shot averaging.
The pseudo code for Local SGD is provided as Algorithm \ref{alg: Local SGD}.
\begin{algorithm}
	\caption{Local SGD}
	\begin{algorithmic}[1]\label{alg: Local SGD}
		\STATE Input $\bx_i^0 = \bx^0$ for $i \in [n]$, total number of iterations $T$, the step-size sequence $\{\eta_t\}_{t=0}^{T-1}$ and $\I \subseteq [T]$
		\FOR{$t=0,\ldots,T-1$}
		\FOR{$j=1,\ldots,n$}
		\STATE evaluate a stochastic gradient $\hbg_j^t$
		\IF{$t+1 \in \mathcal{I}$}
		\STATE $\bx_j^{t+1} = \frac{1}{n}\sum_{i=1}^n (\bx_i^t - \eta_t \hbg_i^t)$
		\ELSE
		\STATE $\bx_j^{t+1} = \bx_j^t - \eta_t \hbg_j^t$
		\ENDIF
		\ENDFOR
		\ENDFOR
	\end{algorithmic}
\end{algorithm}

The main goal of this paper is to study the effect of communication times on the convergence of the Local SGD and provide better theoretical guarantees. In what follows, we claim that by carefully choosing the step size, linear speed-up of parallel SGD can be attained with only a small number of communication instances.

\section{Convergence Results}\label{sec: convergence}
In this section we present our convergence results for Local SGD.
In the following theorem, we show an upper bound for the sub-optimality error, in the sense of function value, for any choice of communication times $\I$.

Before proceeding with our results, let us introduce some notation. Let $0=\tau_0<\tau_1 < \ldots < \tau_R = T$ be the communication times. Define $H_i := \tau_{i+1} - \tau_i$, as the length of $i+1$-th inter-communication interval, for $i=0,\ldots,k-1$. Moreover, define $\bbx^t := (\sum_{i=1}^n \bx_i^t)/n$ as the the average of the iterates of all workers. Notice that $\bx_i^t = \bbx^t$ for $t\in \I$.

The main results of this paper will be obtained by specializing the following bound. 

\begin{theorem}\label{thm: general}
	Suppose Assumptions \ref{as: F} and \ref{as: g} hold. Choose $\beta \geq 2\kappa^2 $ and communication times $\I = \{\tau_i|i=1,\ldots,R\}$ such that it holds
	\begin{align}\label{eq: beta condition}
	    9\kappa^2c \ln(1 + \frac{H_i - 1}{\tau_i + \beta}) + 2\kappa(1+\frac{c}{n}) - (\tau_i +1 + \beta)\leq 0, \qquad i=0,\ldots,R-1.
	\end{align}
	Set $\eta_k = 2/ (\mu (k+\beta))$. Then, using Algorithm \ref{alg: Local SGD}, we have 
	\begin{align}\label{eq: opt E3}
	\E[F(\bbx^T)] - F^* \leq \frac{\beta^2 (F(\bbx^0) - F^*)}{T^2}  +  \frac{2L \sigma^2}{n \mu^2 T} + \frac{9L^2 \sigma^2}{ \mu^3 T^2} \sum_{t=0}^{T-1} \frac{t-\tau(t)}{t+\beta},
	\end{align}
	where $F^* := \min_{\bx} F(\bx)$ and $\tau(t) := \max\{t' \in \mathcal{I}| t' \leq t\}$ is the most recent communication time.
\end{theorem}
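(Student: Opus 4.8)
The plan is to reduce the whole analysis to that of a single (virtual) SGD sequence run on the averages. The first observation is that averaging commutes with the local gradient step, so that regardless of whether $t+1 \in \I$ the average satisfies $\bbx^{t+1} = \bbx^t - \eta_t \bbg^t$ with $\bbg^t := \frac{1}{n}\sum_{i=1}^n \hbg_i^t$. This turns the problem into controlling an SGD recursion for $\bbx^t$ in which the gradients are evaluated at the scattered points $\bx_i^t$ rather than at $\bbx^t$; the gap between these is the consensus error that the communication schedule must control.

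First I would derive a one-step inequality for the function-value gap $e_t := \E[F(\bbx^t)] - F^*$. Applying the descent lemma (the $L$-smooth side of Assumption \ref{as: F}) to $\bbx^{t+1} = \bbx^t - \eta_t\bbg^t$, taking conditional expectation, and using $\E[\bbg^t] = \frac{1}{n}\sum_i \nabla F(\bx_i^t)$, I would split the inner product $\langle \nabla F(\bbx^t), \frac{1}{n}\sum_i\nabla F(\bx_i^t)\rangle$ via the polarization identity into (i) $\tfrac12\|\nabla F(\bbx^t)\|^2$, which the $\mu$-strong-convexity (PL) bound turns into a $-\mu\eta_t e_t$ descent term, (ii) $\frac{1}{2n}\sum_i\|\nabla F(\bx_i^t)\|^2$, and (iii) a term $\frac{1}{2n}\sum_i\|\nabla F(\bbx^t) - \nabla F(\bx_i^t)\|^2 \le \frac{L^2}{2n}\sum_i\|\bbx^t - \bx_i^t\|^2$ controlled by smoothness. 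Bounding the second moment $\E[\|\bbg^t\|^2]$ through Assumption \ref{as: g} produces $(1+c/n)\frac{1}{n}\sum_i\|\nabla F(\bx_i^t)\|^2 + \sigma^2/n$; the gradient-norm terms from (ii) then appear with coefficient proportional to $\eta_t\bigl(L\eta_t(1+c/n) - 1\bigr)$, which is nonpositive precisely once $\eta_t$ is small, i.e. once $t+\beta \gtrsim 2\kappa(1+c/n)$. This is exactly the content of the affine summand $2\kappa(1+\frac{c}{n}) - (\tau_i+1+\beta)$ in condition \eqref{eq: beta condition}, and it lets me discard the gradient-norm contributions to reach a clean recursion $e_{t+1} \le (1 - \mu\eta_t)e_t + \frac{L^2\eta_t}{2}D_t + \frac{L\eta_t^2\sigma^2}{2n}$, where $D_t := \frac{1}{n}\sum_i\E[\|\bbx^t - \bx_i^t\|^2]$ is the consensus error.

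The main obstacle is bounding $D_t$. Since all workers agree at the last communication time $\tau(t)$, I would unroll $\bx_i^t - \bbx^t = -\sum_{s=\tau(t)}^{t-1}\eta_s(\hbg_i^s - \bbg^s)$ and track $D_t$ by its own one-step recursion between communications: expanding $\|\bx_i^{t+1}-\bbx^{t+1}\|^2$, the noise pieces $\e_i^s - \bar\e^s$ contribute the variance $\sigma^2$ (together with a strong-growth part proportional to $c\|\nabla F(\bx_i^s)\|^2$), while the deterministic drift is controlled by $L$-smoothness, producing a multiplicative factor of the form $\prod_{s=\tau(t)}^{t-1}\bigl(1 + \O(\kappa\eta_s)\bigr)$. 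Because $\eta_s \propto 1/(s+\beta)$, this product is $\exp\bigl(\O(\kappa)\sum_s \eta_s\bigr) = \exp\bigl(\O(\kappa^2)\ln(1 + \frac{H_i-1}{\tau_i+\beta})\bigr)$, and keeping it bounded while simultaneously absorbing the $c\|\nabla F\|^2$ growth terms is exactly what the logarithmic summand $9\kappa^2 c\,\ln(1+\frac{H_i-1}{\tau_i+\beta})$ of \eqref{eq: beta condition} guarantees; it also explains why only $\sigma^2$ (and not $c$) survives in the final consensus term. The target estimate is $D_t \le \frac{9\sigma^2(t-\tau(t))}{\mu^2(t+\beta)(t+\beta-1)}$, in which the factor $t-\tau(t)$ is just the number of local steps accumulated since the last averaging.

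Finally I would solve the scalar recursion for $e_t$. With $\eta_t = 2/(\mu(t+\beta))$ one has $1-\mu\eta_t = (t+\beta-2)/(t+\beta)$, so multiplying by the weights $w_{t+1} := (t+\beta)(t+\beta-1)$ makes the leading term telescope, since $w_{t+1}(1-\mu\eta_t) = w_t$. Summing $w_{t+1}e_{t+1} \le w_t e_t + (\cdots)$ from $t=0$ to $T-1$ and dividing by $w_T = (T+\beta-1)(T+\beta-2) \ge T^2$ converts $w_0 e_0$ into $\beta^2(F(\bbx^0)-F^*)/T^2$, the $\sigma^2/n$ terms into $2L\sigma^2/(n\mu^2 T)$ (each summand at most $1$, and there are $T$ of them), and the $D_t$ terms, after substituting the consensus bound, into $\frac{9L^2\sigma^2}{\mu^3 T^2}\sum_{t}\frac{t-\tau(t)}{t+\beta}$, which is exactly \eqref{eq: opt E3}. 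The only genuinely delicate bookkeeping is matching the constants (the $9$ and the threshold $\beta \ge 2\kappa^2$) so that the interval-wise hypotheses of \eqref{eq: beta condition} are met for every $i$ at once.
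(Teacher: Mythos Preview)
Your overall architecture---virtual sequence $\bbx^t$, descent lemma plus polarization to get a one-step recursion with a negative $-\tfrac{\eta_t}{2n}\sum_i\|\nabla F(\bx_i^t)\|^2$ term, a separate consensus recursion, and telescoping with weights $\asymp (t+\beta)^2$---matches the paper. The gap is in how you handle the strong-growth terms. You propose to use the affine piece $2\kappa(1+\tfrac{c}{n})-(\tau_i+1+\beta)$ of \eqref{eq: beta condition} to discard the gradient-norm terms \emph{at each step}, arriving at the ``clean'' recursion $e_{t+1}\le(1-\mu\eta_t)e_t+\tfrac{L^2\eta_t}{2}D_t+\tfrac{L\eta_t^2\sigma^2}{2n}$, and then to obtain a $c$-free consensus bound $D_t\le 9\sigma^2(t-\tau(t))/(\mu^2(t+\beta)(t+\beta-1))$. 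But the consensus recursion itself produces additive terms $c\,\E[G^k]$ for every $k\in[\tau(t),t-1]$ (this is unavoidable under Assumption~\ref{as: g} with $c>0$), and once you have spent the single negative term $-\tfrac{\eta_t}{2}\E[G^t]$ against the variance contribution, there is nothing left to absorb them. The logarithmic summand in \eqref{eq: beta condition} is not an independent absorption mechanism; it is just another positive quantity that must be dominated by the \emph{same} negative term.

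What the paper does instead is keep all $\E[G^t]$ contributions---both the $(1+c/n)$ variance piece and the $c\,\E[G^k]$ pieces inside the consensus bound---through the multiplication by $(t+\beta)^2$ and the summation over one inter-communication interval $[\tau_i,\tau_{i+1})$. After swapping the order of summation, each $\E[G^t]$ appears with the single coefficient $\sum_{k=t+1}^{\tau_{i+1}-1}\tfrac{9L^2c}{\mu^3(k+\beta)}+\tfrac{2L}{\mu^2}(1+\tfrac{c}{n})-\tfrac{t+\beta}{\mu}$, and bounding the harmonic sum by $\tfrac{9L^2c}{\mu^3}\ln\!\bigl(1+\tfrac{H_i-1}{\tau_i+\beta}\bigr)$ is exactly where the logarithm in \eqref{eq: beta condition} comes from; the whole condition then makes this combined coefficient nonpositive in one stroke. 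A secondary point: your consensus drift factor is not $1+\O(\kappa\eta_s)>1$ but rather $1-2\eta_s\mu+\eta_s^2L^2\le 1-\tfrac{2}{s+\beta}$, a contraction, and this is precisely what the hypothesis $\beta\ge 2\kappa^2$ buys---it is what turns the unrolled consensus bound into $9(n-1)\sum_k(c\E[G^k]+\sigma^2)/(\mu^2(t+\beta)^2)$ with the clean constant $9$.
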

The last term in Equation \eqref{eq: opt E3} is due the to disagreement between workers (consensus error), introduced by local computations without any communication. As the inter-communication intervals become larger, $t-\tau(t)$ becomes larger as well and increases the overall optimization error. This term explains the trade-off between communication efficiency and the optimization error.

Note that condition \eqref{eq: beta condition} is mild. For instance, it suffices to set $\beta \geq \max \{9\kappa^2 c \ln(1 + T/(2\kappa^2)) + 2\kappa(1+c/n), 2\kappa^2\}$. Moreover, the bound in \eqref{eq: opt E3} is for the last iterate $T$, and does not require keeping track of a weighted average of all the iterates.

Theorem \ref{thm: general} not only bounds the optimization error, but introduces a methodological approach to select the communication times to achieve smaller errors. For the scenarios when the user can afford to have a certain number of a communications, they can select $\tau_i$ to minimize the last term in \eqref{eq: opt E3}.

We next discuss the implications of Theorem \ref{thm: general} under various conditions. 

\paragraph{One-Shot Averaging.} Plugging $H=T$ in Theorem \ref{thm: general}, we obtain a convergence rate of $\O(\kappa^2 \sigma^2/(\mu T))$ without any linear speed-up. Among previous works, only \cite{khaled2019tighter} shows a similar result.

\subsection{Fixed-Length Intervals}
A simple way to select the communication times $\I$, is to split the whole training time $T$ to $R$ intervals of length at most $H$. Then we can use the following bound in Equation \eqref{eq: opt E3}, 
\begin{align*}
\sum_{t=0}^{T-1} \frac{t-\tau(t)}{t+\beta} \leq (H-1)\sum_{t=0}^{T-1} \frac{1}{t+\beta} \leq (H-1) \ln(1+\frac{T}{ \beta - 1}).
\end{align*}
We state this result formally in the following corollary.
\begin{corollary}
	Suppose assumptions of Theorem \ref{thm: general} hold and in addition, workers communicate at least once every $H$ iterations. Then,
	\begin{align}\label{eq: fixed-int}
	\E[F(\bbx^T)] - F^* \leq \frac{\beta^2(F(\bbx^0) - F^*)}{T^2} +  \frac{2L \sigma^2}{n \mu^2 T} + \frac{9L^2 \sigma^2 (H-1)}{ \mu^3 T^2} \ln(1 + \frac{T}{\beta - 1}).
	\end{align}
\end{corollary}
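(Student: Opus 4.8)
The plan is to obtain the corollary as a direct specialization of Theorem~\ref{thm: general}: the only quantity in \eqref{eq: opt E3} that depends on the communication schedule is the consensus-error sum $\sum_{t=0}^{T-1} (t-\tau(t))/(t+\beta)$, so it suffices to upper bound this sum under the hypothesis that workers communicate at least once every $H$ iterations, and then substitute the result back into \eqref{eq: opt E3}. Everything else in the bound is inherited verbatim from the theorem.

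First I would translate the communication hypothesis into a pointwise bound on the staleness. Since $\tau(t)=\max\{t'\in\I : t'\le t\}$ is the most recent communication time and consecutive communications are at most $H$ iterations apart, the elapsed time since the last averaging satisfies $t-\tau(t)\le H-1$ for every $t\in\{0,\dots,T-1\}$: if $\tau_i\le t<\tau_{i+1}$ then $t-\tau(t)=t-\tau_i\le(\tau_{i+1}-1)-\tau_i\le H-1$. This lets me factor the numerator out of the sum and reduce to a scalar harmonic-type quantity,
\begin{align*}
\sum_{t=0}^{T-1}\frac{t-\tau(t)}{t+\beta}\le (H-1)\sum_{t=0}^{T-1}\frac{1}{t+\beta}.
\end{align*}
I would then bound the remaining sum by an integral comparison: because $s\mapsto 1/(s+\beta)$ is decreasing, each term obeys $1/(t+\beta)\le \int_{t-1}^{t} 1/(s+\beta)\,ds$, and summing over $t=0,\dots,T-1$ telescopes the integration window to $[-1,T-1]$, giving $\sum_{t=0}^{T-1} 1/(t+\beta)\le \int_{-1}^{T-1} 1/(s+\beta)\,ds = \ln\frac{T+\beta-1}{\beta-1}=\ln(1+\tfrac{T}{\beta-1})$. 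Substituting this chain of inequalities into the last term of \eqref{eq: opt E3} yields \eqref{eq: fixed-int}.

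There is no serious obstacle here, since the statement is essentially a book-keeping consequence of Theorem~\ref{thm: general}; the only points requiring care are (i) choosing the shifted integration window $[-1,T-1]$ rather than $[0,T]$, so that the constant inside the logarithm comes out as $\beta-1$ exactly, matching \eqref{eq: fixed-int}, and (ii) checking that the bound is well defined, which holds because the hypotheses of Theorem~\ref{thm: general} impose $\beta\ge 2\kappa^2\ge 2$ (as $\kappa=L/\mu\ge 1$), guaranteeing $\beta-1>0$.
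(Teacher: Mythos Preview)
Your proposal is correct and follows essentially the same argument as the paper: bound $t-\tau(t)\le H-1$, factor it out of the sum, and control the remaining harmonic-type sum by the integral estimate to obtain $(H-1)\ln(1+T/(\beta-1))$, then substitute into \eqref{eq: opt E3}. The paper states this inequality chain without the supporting details you supply (the staleness bound via $\tau_i\le t<\tau_{i+1}$, the shifted integration window $[-1,T-1]$, and the observation that $\beta\ge 2$ ensures $\beta-1>0$), so your write-up is in fact more complete.
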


\paragraph{Linear Speed-Up.}
Setting $H= \O(T/(n \ln(T)))$ we achieve linear-speed up in the number of workers, which is equivalent to a communication complexity of $R = \Omega(n \ln(T))$. To the best of the authors' knowledge, this is the tightest communication complexity that is shown to achieve linear speed-up. \cite{khaled2019tighter} and \cite{stich2019error} have shown a similar communication complexity, however with slightly higher degrees of dependence on $\ln(T)$, e.g., $R=\Omega(n \ln(T)^2)$ in \cite{khaled2019tighter}.

\paragraph{Recovering Synchronized SGD.} When $H=1$, the the last term in \eqref{eq: fixed-int} disappears and we recover the convergence rate of parallel SGD, albeit, with a worse dependence on $\kappa$.

\subsection{Varying Intervals}
In the previous subsection, we observed that with our current analysis, having fixed-length inter-communication intervals, linear speed-up can be achieved with only $\Omega(n\ln(T))$ rounds of communications. A natural question that might arise is whether we can improve the result above even further. 

Let us allow consecutive inter-communication intervals, i.e., $H_i := \tau_{i+1} - \tau_i$, grow linearly, where $0=\tau_0 <\tau_1<\ldots<\tau_R = T$ are the communication times. The following Theorem presents a performance guarantee for this choice of communication times.


\begin{theorem}\label{thm: logT}
	Suppose Assumptions \ref{as: F} and \ref{as: g} hold. Choose the maximum number of communications $1 \leq R \leq \sqrt{2T}$ and set $a:=\lceil 2T/R^2 \rceil \geq 1$, $H_i = a(i+1)$ and $\tau_{i+1} = \min(\tau_i + H_i, T)$ for $i=0,\ldots,R-1$. Choose $\beta \geq \max \{2 \kappa^2, 9\kappa^2c \max\{\ln(3), \ln(1+T/(R^2\kappa^2))\} + 2\kappa(1+c/n) \}$ and set $\eta_t = 2/ \mu (t+\beta)$. Then using Algorithm \ref{alg: Local SGD} we have,
	\begin{align}\label{eq: opt general linear H}
	\E[F(\bbx^T)] - F^* \leq \frac{ \beta^2 (F(\bbx^0) - F^*)}{T^2} +  \frac{2L \sigma^2}{n \mu^2 T} 
	+ \frac{72L^2 \sigma^2}{\mu^3 TR}.
	\end{align}
\end{theorem}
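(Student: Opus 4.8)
The plan is to obtain Theorem~\ref{thm: logT} as a direct specialization of Theorem~\ref{thm: general}. Note that the first two terms of the target bound \eqref{eq: opt general linear H} are inherited verbatim from \eqref{eq: opt E3}, so the whole theorem reduces to two tasks: (i) checking that the prescribed schedule and the step size $\eta_t = 2/(\mu(t+\beta))$ satisfy the hypothesis \eqref{eq: beta condition} of Theorem~\ref{thm: general}, and (ii) bounding the residual consensus sum $\sum_{t=0}^{T-1}(t-\tau(t))/(t+\beta)$ by $8T/R$, so that the third term $\frac{9L^2\sigma^2}{\mu^3 T^2}\cdot\frac{8T}{R}$ collapses to exactly $\frac{72L^2\sigma^2}{\mu^3 TR}$. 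The first step I would take is to record the closed form of the communication times: since $H_i = a(i+1)$, the untruncated times satisfy $\tau_i = \sum_{k=0}^{i-1}a(k+1) = \tfrac{a\,i(i+1)}{2}$, which makes $H_i^2/\tau_i$ essentially constant in $i$ — this is precisely the design principle behind letting the intervals grow linearly. The truncation $\tau_{i+1}=\min(\tau_i+H_i,T)$ can only shorten the last genuine interval, so throughout I would keep $\tau_i = \tfrac{a\,i(i+1)}{2}$ in every denominator and use the effective length $\leq a(i+1)$ in every numerator, since shortening an interval only decreases its contribution.

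To verify \eqref{eq: beta condition}, observe that because $\tau_i \geq 0$ it suffices to dominate $9\kappa^2 c\,\ln(1+\tfrac{H_i-1}{\tau_i+\beta})+2\kappa(1+c/n)$ by $\beta$. For $i\geq 1$ the closed form gives $\tfrac{H_i-1}{\tau_i+\beta}\leq \tfrac{a(i+1)}{a\,i(i+1)/2}=\tfrac{2}{i}\leq 2$, so the logarithm is at most $\ln 3$; for $i=0$ one has $\tfrac{H_0-1}{\beta}\leq a/\beta \leq (2T/R^2+1)/(2\kappa^2)$, whose logarithm is controlled by $\ln(1+T/(R^2\kappa^2))$ together with the $\kappa\geq 1$ slack. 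Both cases are then absorbed by the stated choice $\beta\geq \max\{2\kappa^2,\,9\kappa^2 c\max\{\ln 3,\ln(1+T/(R^2\kappa^2))\}+2\kappa(1+c/n)\}$, which licenses the application of Theorem~\ref{thm: general}.

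For the core estimate I would split the consensus sum over intervals and use $\tfrac{t-\tau_i}{t+\beta}\leq\tfrac{t-\tau_i}{\tau_i+\beta}$, turning each inner sum into an arithmetic series bounded by $\tfrac{H_i^2}{2(\tau_i+\beta)}$. For $i\geq 1$ the closed form then yields $\tfrac{H_i^2}{2(\tau_i+\beta)}\leq \tfrac{a^2(i+1)^2}{a\,i(i+1)}=a(1+\tfrac1i)\leq 2a$, a bound \emph{uniform} in $i$; this is exactly why the harmonic-type blow-up one might at first fear never appears. For $i=0$ I would instead bound $\tfrac{t}{t+\beta}\leq 1$, giving a contribution of at most $\tau_1\leq a$. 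Summing, $\sum_{t=0}^{T-1}(t-\tau(t))/(t+\beta)\leq a+2a(R-1)\leq 2aR$, and finally $a=\lceil 2T/R^2\rceil\leq 2T/R^2+1\leq 4T/R^2$ — where the hypothesis $R\leq\sqrt{2T}$ is used precisely to ensure $2T/R^2\geq 1$ — so that $2aR\leq 8T/R$. Substituting into \eqref{eq: opt E3} produces \eqref{eq: opt general linear H}.

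I expect the main obstacle to be bookkeeping rather than conceptual. Pinning the final constant to \emph{exactly} $72$ forces the $8T/R$ bound to be essentially tight, so every source of slack — the ceiling in $a$, the step $1/i\leq 1$, the separate handling of $i=0$, and the truncation argument — must be tracked carefully. In parallel, the verification of \eqref{eq: beta condition} at $i=0$ is the one genuinely delicate point, since the additive $1/(2\kappa^2)$ appearing inside the logarithm must be shown to fit under either $\ln 3$ or $\ln(1+T/(R^2\kappa^2))$ for every admissible $R$ and $T$.
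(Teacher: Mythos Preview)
Your proposal is correct and follows essentially the same route as the paper: verify \eqref{eq: beta condition} by the $i\ge 1$/$i=0$ split, then bound $\sum_t (t-\tau(t))/(t+\beta)$ interval-by-interval by $a + 2a(R-1)\le 2aR \le 8T/R$ using $R\le\sqrt{2T}$. The ``delicate point'' you flag at $i=0$ disappears if you keep $H_0-1=a-1$ exactly rather than loosening to $a$: since $a-1<2T/R^2$ and $\beta\ge 2\kappa^2$, one gets $(a-1)/\beta < T/(R^2\kappa^2)$ on the nose, so the $\ln(1+T/(R^2\kappa^2))$ branch of the $\beta$ hypothesis covers it with no extra slack needed.
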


The choice of communication times in Theorem \ref{thm: logT} aligns with the intuition that workers need to communicate more frequently at the beginning of the optimization. As the the step-sizes become smaller and workers' local parameters get closer to the global minimum, they diverge more slowly from each other and, hence, less communication is required to re-align them.

\paragraph{Linear Speed-Up.}
Choosing communication rounds $R = \Omega(n)$, we achieve an error that scales as $1/(nT)$ in the number of workers when $T=\Omega(n^2)$. {\bfseries This is the main result of this paper}: it shows that we can get a linear speedup in the number of workers by simply increasing the number of iterations while keeping the total number of communications bounded. 

\section{Sketch of Proof}\label{sec: sketch of proof}
Here we give an outline of the proofs for the results presented in this paper. The proof of the following lemmas are left to the Appendix.

\paragraph{Perturbed Iterates.}
A common approach in analyzing parallel algorithms such as Local SGD is to study the evolution of the sequence $\{\bbx^t\}_{t\geq0}$. We have, 
\begin{align}\label{eq: average x update}
    \bbx^{t+1} = \bbx^t - \frac{\eta_t}{n}\sum_{i=1}^n \hbg_i^t 
    = \bbx^t - \eta_t \tbg^t,
\end{align}
where $\tbg^t := (\sum_{i=1}^n \hbg_i^t)/n$ is the average of the stochastic gradient estimates of all workers. 

Let us define $\xi^t:= \E[F(\bbx^t)] - F^*$ to be the optimality error. The following lemma, which is similar to a part of the proof found in \cite{haddadpour2019local}, bounds the optimality error at each iteration recursively.
\begin{lemma}\label{lem: error decay}
    Let Assumptions \ref{as: F} and \ref{as: g} hold. Then,
    \begin{align*}
	    \xi^{t+1} \leq \xi^t(1 - \mu \eta_t) + \frac{L^2 \eta_t}{2n} \E \left[ \sum_{i=1}^n \Vert \bbx^t - \bx_i^t \Vert^2 \right] + \frac{\eta_t^2 L}{2} \E[\Vert \tbg^t \Vert_2^2 ] 
	    - \frac{\eta_t}{2n} \E \left[ \sum_{i=1}^n \Vert \nabla F(\bx_i^t) \Vert^2\right].
	\end{align*}
\end{lemma}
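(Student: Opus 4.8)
The plan is to track the averaged iterate $\bbx^t$, whose dynamics are the clean recursion $\bbx^{t+1} = \bbx^t - \eta_t \tbg^t$ from Equation \eqref{eq: average x update}, and to feed it into the descent inequality coming from smoothness. First I would apply the upper bound in Assumption \ref{as: F} with the pair $(\bbx^t, \bbx^{t+1})$ and substitute $\bbx^{t+1} - \bbx^t = -\eta_t \tbg^t$ to get
\begin{align*}
F(\bbx^{t+1}) \leq F(\bbx^t) - \eta_t \langle \nabla F(\bbx^t), \tbg^t \rangle + \frac{L \eta_t^2}{2} \Vert \tbg^t \Vert^2.
\end{align*}
Conditioning on the iterates at time $t$ and using only the unbiasedness part of Assumption \ref{as: g}, $\E[\e_i] = \mathbf 0$, replaces $\tbg^t$ inside the inner product by the average of the true local gradients $\frac{1}{n}\sum_{i=1}^n \nabla F(\bx_i^t)$. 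Note that the variance bound itself is deliberately not invoked here: the term $\E[\Vert \tbg^t \Vert^2]$ is kept intact in the statement and will be bounded in a later lemma.

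The core step is to control the cross term $-\eta_t \langle \nabla F(\bbx^t), \frac{1}{n}\sum_{i} \nabla F(\bx_i^t)\rangle$. I would expand it termwise with the polarization identity $\langle a,b\rangle = \tfrac12(\Vert a\Vert^2 + \Vert b\Vert^2 - \Vert a-b\Vert^2)$, producing three pieces: $-\frac{\eta_t}{2}\Vert \nabla F(\bbx^t)\Vert^2$, $-\frac{\eta_t}{2n}\sum_i \Vert \nabla F(\bx_i^t)\Vert^2$, and $+\frac{\eta_t}{2n}\sum_i \Vert \nabla F(\bbx^t) - \nabla F(\bx_i^t)\Vert^2$. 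The first is converted into the contraction factor: $\mu$-strong convexity in Assumption \ref{as: F} yields the Polyak--Lojasiewicz inequality $\Vert \nabla F(\bbx^t)\Vert^2 \geq 2\mu (F(\bbx^t) - F^*)$, so after taking expectations $-\frac{\eta_t}{2}\E[\Vert \nabla F(\bbx^t)\Vert^2] \leq -\mu\eta_t \xi^t$, giving the factor $(1-\mu\eta_t)$. The second piece carries over verbatim as the subtracted gradient term in the statement. For the third I would use $L$-smoothness in its Lipschitz-gradient form $\Vert \nabla F(\bbx^t) - \nabla F(\bx_i^t)\Vert \leq L\Vert \bbx^t - \bx_i^t\Vert$, bounding it by $\frac{L^2\eta_t}{2n}\E[\sum_i \Vert \bbx^t - \bx_i^t\Vert^2]$, which is exactly the consensus-error term. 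Taking full expectations via the tower property and collecting everything reproduces the claimed recursion.

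No single step is technically deep; the main thing to get right is the sign bookkeeping in the polarization step. Because the inner product enters with a leading $-\eta_t$, the piece $-\Vert \nabla F(\bbx^t) - \nabla F(\bx_i^t)\Vert^2$ flips to a positive contribution and is therefore the one I want to upper-bound via smoothness, whereas $\Vert \nabla F(\bbx^t)\Vert^2$ and $\sum_i\Vert \nabla F(\bx_i^t)\Vert^2$ enter negatively and must be preserved—the former relaxed through PL into a contraction, the latter retained as is. The only genuine subtlety is to use solely the unbiasedness of the oracle and not its variance bound, so that $\E[\Vert \tbg^t\Vert^2]$ is left untouched for the subsequent analysis.
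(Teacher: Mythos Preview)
Your proposal is correct and follows essentially the same route as the paper: smoothness gives the descent inequality, unbiasedness reduces the cross term to $\langle \nabla F(\bbx^t), \tfrac{1}{n}\sum_i \nabla F(\bx_i^t)\rangle$, the polarization identity $\langle a,b\rangle = \tfrac12\Vert a\Vert^2 + \tfrac12\Vert b\Vert^2 - \tfrac12\Vert a-b\Vert^2$ splits it into the three pieces, and then PL plus Lipschitz gradients finish the bound. Your emphasis on using only unbiasedness (leaving $\E[\Vert \tbg^t\Vert^2]$ intact) and on the sign flip in the polarization step matches the paper's argument exactly.
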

Equipped with Lemma \ref{lem: error decay}, we can bound the consensus error ($\E[\sum_{i=1}^n \Vert \bbx^t - \bx_i^t \Vert^2]$) as well as the term $\E[\Vert \tbg^t \Vert^2]$ in the following lemmas. 

\paragraph{Consensus Error.}
In the following lemmas, we utilize the structure of the problem to bound the consensus error recursively.
\begin{lemma}\label{lem: consensus 1}
	Let Assumptions \ref{as: F} and \ref{as: g} hold. Then,
	\begin{multline}\label{eq: consensus 2}
	\E\left[ \sum_{i=1}^n \Vert \bx_i^{t+1} - \bbx^{t+1} \Vert^2 \right] \leq \E\left[ \sum_{i=1}^n \Vert \bx_i^{t} - \bbx^t \Vert^2 \right](1 - 2\eta_t \mu + \eta_t^2 L^2) \\
	+ (n-1)\eta_t^2\sigma^2
	+ (1-\frac{1}{n})\eta_t^2 c\E [\sum_{i=1}^n \Vert g_i^t \Vert^2].
	\end{multline}
\end{lemma}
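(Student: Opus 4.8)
\bigskip

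The plan is to track the quantity $\sum_{i=1}^n \|\bx_i^{t+1} - \bbx^{t+1}\|^2$ by expressing each worker's deviation in terms of the deviation at the previous step plus the gradient update, and then taking expectations while exploiting the conditional independence and unbiasedness of the noise from Assumption~\ref{as: g}. First I would handle the trivial case: if $t+1 \in \I$, then all workers average and $\bx_i^{t+1} = \bbx^{t+1}$, so the left-hand side is zero and the inequality holds automatically (every term on the right is nonnegative). Thus the real content is the case $t+1 \notin \I$, where each worker takes a plain local step $\bx_i^{t+1} = \bx_i^t - \eta_t \hbg_i^t$ and the average satisfies $\bbx^{t+1} = \bbx^t - \eta_t \tbg^t$ by Equation~\eqref{eq: average x update}.

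\bigskip

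Subtracting these two updates gives
\begin{align*}
\bx_i^{t+1} - \bbx^{t+1} = (\bx_i^t - \bbx^t) - \eta_t(\hbg_i^t - \tbg^t).
\end{align*}
Expanding the squared norm yields a diagonal term $\|\bx_i^t - \bbx^t\|^2$, a cross term $-2\eta_t \langle \bx_i^t - \bbx^t,\, \hbg_i^t - \tbg^t\rangle$, and a noise-update term $\eta_t^2 \|\hbg_i^t - \tbg^t\|^2$. Summing over $i$ and taking expectations, I would split each stochastic gradient as $\hbg_i^t = \bg_i^t + \e_i^t$ (true gradient plus zero-mean noise). Since the $\e_i^t$ are conditionally independent with mean zero, the expectation of the summed squared-deviation term $\E[\sum_i \|\hbg_i^t - \tbg^t\|^2]$ decomposes into a true-gradient part $\E[\sum_i \|\bg_i^t - \bbg^t\|^2]$ plus a noise part. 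For the noise part, the key identity is that $\sum_i \|\e_i^t - \bar\e^t\|^2$ has conditional expectation bounded by $(1 - 1/n)\sum_i \E[\|\e_i^t\|^2]$, and then Assumption~\ref{as: g} gives $\E[\|\e_i^t\|^2 \mid \bx_i^t] \leq c\|\bg_i^t\|^2 + \sigma^2$. This is exactly where the factors $(n-1)\eta_t^2\sigma^2$ and $(1-1/n)\eta_t^2 c\,\E[\sum_i \|\bg_i^t\|^2]$ in \eqref{eq: consensus 2} come from.

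\bigskip

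The true-gradient deviation terms — the diagonal $\|\bx_i^t - \bbx^t\|^2$, the cross term, and $\eta_t^2\|\bg_i^t - \bbg^t\|^2$ — must be recombined into the contraction factor $(1 - 2\eta_t\mu + \eta_t^2 L^2)\E[\sum_i \|\bx_i^t - \bbx^t\|^2]$. This is the step I expect to be the main obstacle. The strategy is to use strong convexity and smoothness (Assumption~\ref{as: F}) on the gradient differences: $\mu$-strong convexity gives a lower bound $\langle \bg_i^t - \bg_j^t, \bx_i^t - \bx_j^t\rangle \geq \mu\|\bx_i^t - \bx_j^t\|^2$ controlling the cross term, while $L$-smoothness gives $\|\bg_i^t - \bg_j^t\| \leq L\|\bx_i^t - \bx_j^t\|$ controlling the quadratic gradient term. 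One subtlety is that these inequalities are most naturally stated pairwise between workers $i$ and $j$, whereas the deviations are centered at the mean $\bbx^t$; I would rewrite the centered sums $\sum_i \|\bx_i^t - \bbx^t\|^2 = \frac{1}{2n}\sum_{i,j}\|\bx_i^t - \bx_j^t\|^2$ and similarly for the gradients, apply the pairwise convexity/smoothness bounds, and reassemble. Matching the cross term (coefficient $-2\eta_t\mu$) and the gradient term (coefficient $+\eta_t^2 L^2$) to produce precisely the stated contraction factor is the delicate bookkeeping, but the convexity/smoothness inequalities are tailored to supply exactly these coefficients.
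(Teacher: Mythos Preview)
Your proposal is correct and follows essentially the same route as the paper: separate the update into the true-gradient part and the zero-mean noise part, use conditional independence plus the identity $\sum_i\|\e_i^t-\bar\e^t\|^2 = (1-\tfrac1n)\sum_i\|\e_i^t\|^2$ together with Assumption~\ref{as: g} to produce the $(n-1)\eta_t^2\sigma^2$ and $(1-\tfrac1n)\eta_t^2 c\,\E[\sum_i\|\bg_i^t\|^2]$ terms, and then rewrite the centered sums as pairwise sums to apply strong convexity and smoothness for the contraction factor $(1-2\eta_t\mu+\eta_t^2L^2)$. The only cosmetic difference is that the paper opens with the variance decomposition $\E\|X\|^2 = \|\E X\|^2 + \E\|X-\E X\|^2$ (conditionally on $\F^t$) to isolate the noise upfront, whereas you expand the square first and then let the noise cross terms vanish in expectation; the two organizations are equivalent.
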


This lemma, bounds how much the consensus error grows at each iteration. Of course, when workers communicate, this error resets to zero and thus, we can calculate an upper bound for the consensus error, knowing the last iteration communication occurred and the step-size sequence. The following lemma takes care of that. Before stating the following lemma, let us define $G^t := \frac{1}{n}\sum_{i=1}^n \Vert \bg_i^t \Vert^2$.

\begin{lemma}\label{lem: consensus 2}
	Let assumptions of Theorem \ref{thm: general} hold. Then,
	\begin{align}
	\E\left[ \sum_{i=1}^n \Vert \bx_i^{t} - \bbx^t \Vert^2 \right] \leq
    9(n-1)\sum_{k=\tau(t)}^{t-1}\frac{c\E[G^k]+\sigma^2}{\mu^2(t+\beta)^2}.
	\end{align}
\end{lemma}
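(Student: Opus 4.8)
The plan is to start from the one-step growth recursion for the consensus error established in Lemma \ref{lem: consensus 1} and unroll it from the most recent communication time $\tau(t)$ up to the current iteration $t$. Since at every communication time all workers agree, i.e. $\bx_i^{\tau(t)} = \bbx^{\tau(t)}$, the consensus error vanishes at $k=\tau(t)$, so unrolling gives a sum over $k$ from $\tau(t)$ to $t-1$ of the per-step noise injections $(n-1)\eta_k^2\sigma^2 + (1-\tfrac{1}{n})\eta_k^2 c\,\E[\sum_i \Vert \bg_i^k\Vert^2]$, each multiplied by a product of the contraction factors $(1-2\eta_j\mu + \eta_j^2 L^2)$ for $j$ ranging from $k+1$ to $t-1$.

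The key technical step is to control that product of contraction factors and show it is bounded by a constant (I expect the constant $9$ to emerge here). With $\eta_j = 2/(\mu(j+\beta))$, each factor is $1 - \tfrac{4}{j+\beta} + \tfrac{4\kappa^2}{(j+\beta)^2}$. First I would bound each factor by $\exp$ of its deviation from $1$, turn the product into $\exp$ of a sum, and split that sum into a dominant negative contribution $-\sum_j 4/(j+\beta)$ and a positive correction $\sum_j 4\kappa^2/(j+\beta)^2$. The negative part telescopes (via integral bounds on $\sum 1/(j+\beta)$) into a ratio like $((k+1+\beta)/(t+\beta))^{4}$, while the positive correction is uniformly bounded once $\beta \geq 2\kappa^2$, since $\sum_j 4\kappa^2/(j+\beta)^2 \leq 4\kappa^2/\beta \leq 2$. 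Combining, the product of contraction factors from $k+1$ to $t-1$ is at most a constant times $((k+\beta)/(t+\beta))^{4}$ or similar.

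Then I would substitute $\eta_k^2 = 4/(\mu^2(k+\beta)^2)$ and pull the decaying product factor together with this $1/(k+\beta)^2$ weight. Because the product decays in $k$ while $(t+\beta)^2$ sits in the denominator of the target bound, the cleanest route is to show that the weighted kernel $\eta_k^2 \cdot (\text{product factor})$ is dominated by $\tfrac{4}{\mu^2(t+\beta)^2}$ uniformly in $k \in [\tau(t), t-1]$ — that is, the contraction exactly compensates the difference between $(k+\beta)^2$ and $(t+\beta)^2$, leaving behind a factor bounded by the constant $9/4$. This collapses the whole sum into $9(n-1)\sum_{k=\tau(t)}^{t-1}(c\,\E[G^k]+\sigma^2)/(\mu^2(t+\beta)^2)$ after using the definition $G^k = \tfrac{1}{n}\sum_i \Vert \bg_i^k\Vert^2$, which converts $\E[\sum_i \Vert \bg_i^k\Vert^2]$ into $n\,\E[G^k]$ and absorbs the $(1-\tfrac1n)$ into the $(n-1)$.

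The main obstacle I anticipate is making the bound on the product of contraction terms tight enough that the final constant is genuinely $9$ rather than something larger; this requires carefully coupling the choice $\beta \geq 2\kappa^2$ (so that the quadratic correction in each factor is controlled) with the integral estimate for the telescoping negative sum, and verifying that the $\exp$ overestimate does not inflate the constant. A secondary subtlety is handling the boundary case $k = t-1$ (empty product, factor $1$) and confirming the uniform domination claim holds across the full range rather than only in the asymptotic regime.
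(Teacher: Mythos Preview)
Your plan is structurally identical to the paper's: unroll Lemma~\ref{lem: consensus 1} from $\tau(t)$ to $t$, use $a^{\tau(t)}=0$, and bound the product $\prod_{j=k+1}^{t-1}\Delta_j$ where $\Delta_j = 1 - 2\eta_j\mu + \eta_j^2 L^2$. The difference is only in how you treat that product, and your anticipated obstacle is real: the $\exp$-splitting route you sketch will not recover the constant $9$. Bounding the positive correction by $4\kappa^2/\beta \leq 2$ and the negative sum by an integral gives at best $e^2 \cdot ((k+1+\beta)/(t+\beta))^4$, which after combining with $\eta_k^2 = 4/(\mu^2(k+\beta)^2)$ yields a constant like $9e^2 \approx 66$, not $9$.

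The paper avoids this inflation with a simpler trick: rather than carrying both terms of $\Delta_j$ through the product, it absorbs the quadratic term into the linear one \emph{before} taking products. Since $\beta \geq 2\kappa^2$ implies $4\kappa^2/(j+\beta)^2 \leq 2/(j+\beta)$, one has directly
\[
\Delta_j \;=\; 1 - \frac{4}{j+\beta} + \frac{4\kappa^2}{(j+\beta)^2} \;\leq\; 1 - \frac{2}{j+\beta}.
\]
Then a short product lemma ($\prod_{i=a}^b(1-2/i) \leq (a/(b+1))^2$, via $\ln(1-x)\leq -x$ and a Riemann-sum bound) gives $\prod_{j=k+1}^{t-1}\Delta_j \leq ((k+1+\beta)/(t+\beta))^2$. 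Combining with $\eta_k^2$ yields
\[
\eta_k^2 \prod_{j=k+1}^{t-1}\Delta_j \;\leq\; \frac{4}{\mu^2(t+\beta)^2}\cdot\Bigl(\frac{k+1+\beta}{k+\beta}\Bigr)^2 \;\leq\; \frac{4}{\mu^2(t+\beta)^2}\cdot\frac{9}{4} \;=\; \frac{9}{\mu^2(t+\beta)^2},
\]
using $(k+1+\beta)/(k+\beta) \leq 3/2$ since $\beta\geq 2$. This is exactly the uniform domination you were hoping for, with the power $2$ (not $4$) doing the work and no $e^2$ penalty. Your boundary case $k=t-1$ is handled automatically since the empty product equals $1$ and $4/(\mu^2(t-1+\beta)^2) \leq 9/(\mu^2(t+\beta)^2)$ follows from the same $3/2$ ratio bound.
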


\paragraph{Variance.}
Our next lemma bounds $E[\Vert \tbg^t \Vert^2]$.
\begin{lemma}\label{lem: variance}
    Under Assumption \ref{as: g} we have,
    \begin{align*}
        \E [\Vert \tbg^t \Vert^2 ] \leq (1+\frac{c}{n}) 
        \E[G^t] + \frac{\sigma^2}{n}.
    \end{align*}
\end{lemma}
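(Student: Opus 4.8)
The plan is to expand $\tbg^t$ into its deterministic and noisy parts and exploit the independence of the noise across workers. Writing $\hbg_i^t = \bg_i^t + \e_i$ as in Assumption \ref{as: g} and setting $\bbg^t := \frac{1}{n}\sum_{i=1}^n \bg_i^t$, we have
\[
\tbg^t = \bbg^t + \frac{1}{n}\sum_{i=1}^n \e_i,
\]
so that expanding the square produces three terms:
\[
\E[\Vert \tbg^t\Vert^2] = \E[\Vert \bbg^t\Vert^2] + \frac{2}{n}\E\Big[\big\langle \bbg^t, \sum_{i=1}^n \e_i\big\rangle\Big] + \frac{1}{n^2}\E\Big[\big\Vert \sum_{i=1}^n \e_i\big\Vert^2\Big].
\]
I would condition on the $\sigma$-algebra generated by the iterates $\{\bx_i^t\}_{i=1}^n$, with respect to which each true gradient $\bg_i^t = \nabla F(\bx_i^t)$ is deterministic. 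Since $\E[\e_i]=\mathbf 0$, the middle cross term vanishes after taking the conditional expectation and applying the tower rule.

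For the first term, I would apply Jensen's inequality (convexity of $\Vert\cdot\Vert^2$) to the average of the gradients, giving $\Vert \bbg^t\Vert^2 \le \frac{1}{n}\sum_{i=1}^n \Vert \bg_i^t\Vert^2 = G^t$, and hence $\E[\Vert\bbg^t\Vert^2]\le \E[G^t]$. For the last term, the conditional independence and zero-mean property of the noise kill all off-diagonal contributions $\E[\langle \e_i,\e_j\rangle]=0$ for $i\neq j$, leaving $\frac{1}{n^2}\sum_{i=1}^n \E[\Vert\e_i\Vert^2]$. Invoking the variance bound $\E[\Vert\e_i\Vert^2 \mid \bx_i^t]\le c\Vert\bg_i^t\Vert^2 + \sigma^2$ together with the tower rule, this is at most $\frac{c}{n^2}\sum_{i=1}^n \E[\Vert\bg_i^t\Vert^2] + \frac{\sigma^2}{n} = \frac{c}{n}\E[G^t] + \frac{\sigma^2}{n}$. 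Summing the three contributions yields $(1+\frac{c}{n})\E[G^t] + \frac{\sigma^2}{n}$, which is the claim.

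There is essentially no deep step here; the lemma is a standard second-moment computation. The only point requiring care is the bookkeeping around conditioning: both the vanishing of the cross term and the cancellation of the off-diagonal noise terms rely on the true gradients being measurable with respect to the conditioning $\sigma$-algebra and on the conditional independence of the $\e_i$ across workers. I would therefore be careful to apply the conditioning \emph{before} using unbiasedness and independence, and only then take the outer expectation. The factor $1/n$ appearing in front of $\sigma^2$, which is ultimately what drives the linear speedup in the number of workers, comes precisely from this off-diagonal cancellation.
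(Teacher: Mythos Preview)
Your proof is correct and essentially identical to the paper's. The only cosmetic difference is that the paper bounds $\Vert \bbg^t\Vert^2 \le \frac{1}{n}\sum_{i=1}^n \Vert \bg_i^t\Vert^2$ via the identity $\sum_i \Vert \bu_i - \bar\bu\Vert^2 = \sum_i \Vert \bu_i\Vert^2 - n\Vert\bar\bu\Vert^2$ (its Lemma~\ref{lem: u - ubar}) rather than by naming Jensen's inequality, and it phrases the computation as a single conditional-expectation line rather than splitting into three terms; the content is the same.
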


The proofs of  Theorems \ref{thm: general} and \ref{thm: logT} follow from these lemmas. Due to space constraints, these proofs are given in the supplementary information.

\section{Numerical Experiments}\label{sec: Numerical }
To verify our findings and compare different communication strategies in Local SGD, we performed the following numerical experiments.

 \subsection{Quadratic Function With Strong-Growth Condition}
As discussed in \cite{zhang2016parallel,dieuleveut2019communication}, under uniformly bounded variance, one-shot averaging performs asymptotically as well as mini-batch SGD. Therefore, to fully capture the importance of the choice of communication times $\I$, we design a \textit{hard} problem, where noise variance is uniform with strong-growth condition, defined in Assumption \ref{as: g}.  Let us define $F(\bx) = \E_\zeta f(\bx, \zeta)$ where,
\begin{align}\label{eq: f-zeta}
    f(\bx, \zeta) := \sum_{i=1}^d \frac{1}{2}x_i^2 (1 + z_{1,i}) + \bx^\top \bz_2,
\end{align}
$\zeta = (\bz_1, \bz_2)$, where $\bz_1, \bz_2 \in \R^d, z_{1,i}\sim \mathcal{N}(0,c_1)$ and $z_{2,i} \sim \mathcal{N}(0,c_2)$, $\forall i \in[d]$ are random variables with normal distributions. We assume at each iteration $t$, each worker $i$ samples a $\zeta_i^t$ and uses $\nabla f(\bx, \zeta_i^t)$ as a stochastic estimate of $\nabla F(\bx)$.  It is easy to verify that $F(\bx) = (1/2)\bx^2$ is $1$-strongly convex, $F^* = 0$ and $\E_\zeta[\Vert \nabla f(\bx, \zeta) - \nabla F(\bx) \Vert^2] = c \Vert \nabla F(\bx) \Vert^2 + \sigma^2$, where $c = c_1$ and $\sigma^2 = dc_2$.

We use Local SGD to minimize $F(\bx)$ using different communication strategies. We select $c_1=9, c_2=0.25, d=3$, $n=20$ machines and $T=1000$ iterations and the step-size sequence $\eta_t = 2/\mu(t+\beta)$ with $\beta=1$. We start each simulation from the initial point of $\bx^0 = \mathbf 1_d$ and repeat each simulation $500$ times. The average of the results are reported in Figures \ref{fig: res2}(a) and \ref{fig: res2}(b). Moreover, average performance of Local SGD with different number of workers $n$ and the communication strategy proposed in this paper with $R=n$ is shown in Figure \ref{fig: res2}(c) along with the respective convergence rate of $\sigma^2/(\mu n t)$.

\begin{figure}
	\centering
	\begin{subfigure}[b]{0.325\textwidth}
		\includegraphics[width=\textwidth]{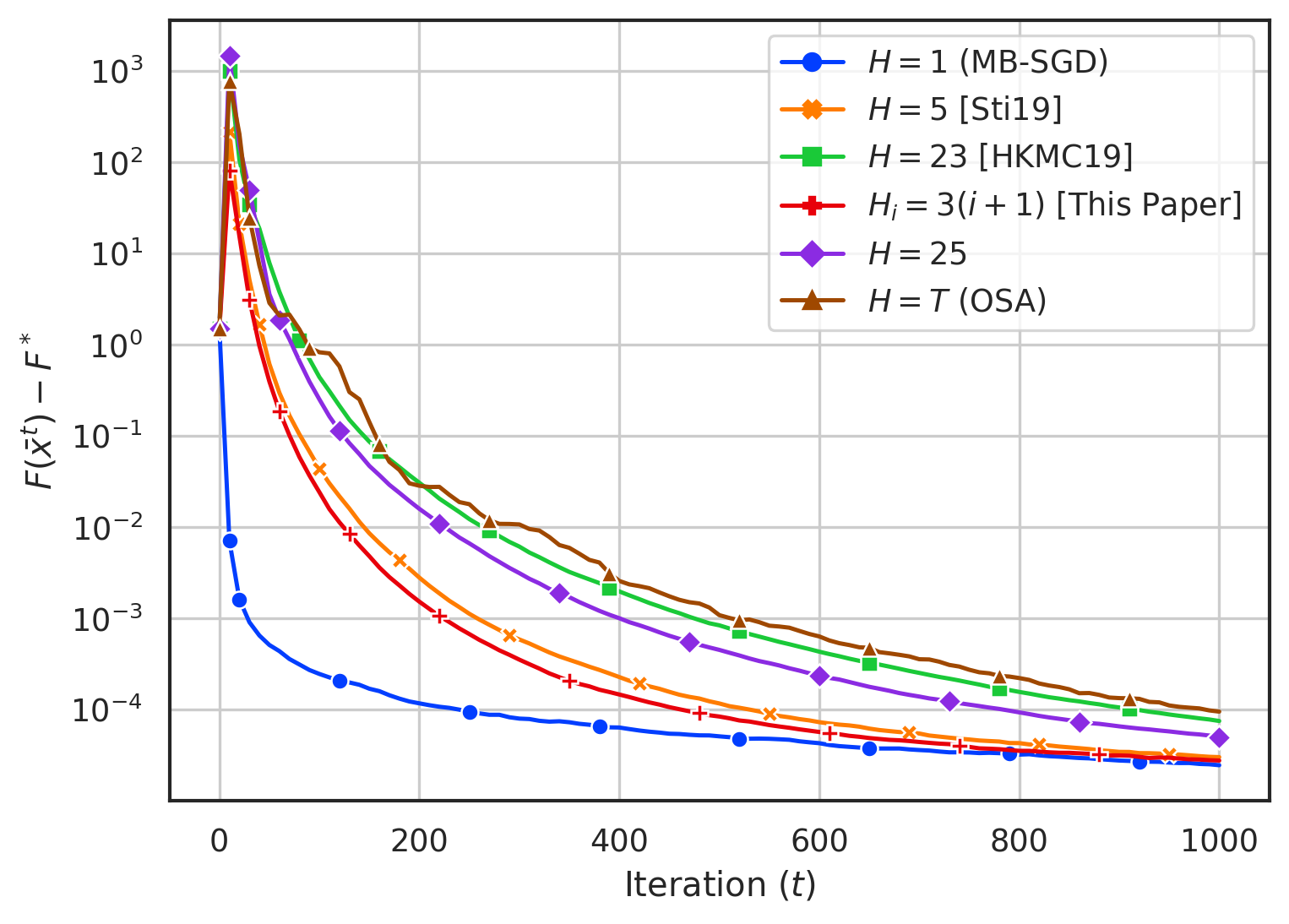}
		\caption{Error over iterations.}
	\end{subfigure}
	\begin{subfigure}[b]{0.325\textwidth}
		\includegraphics[width=\textwidth]{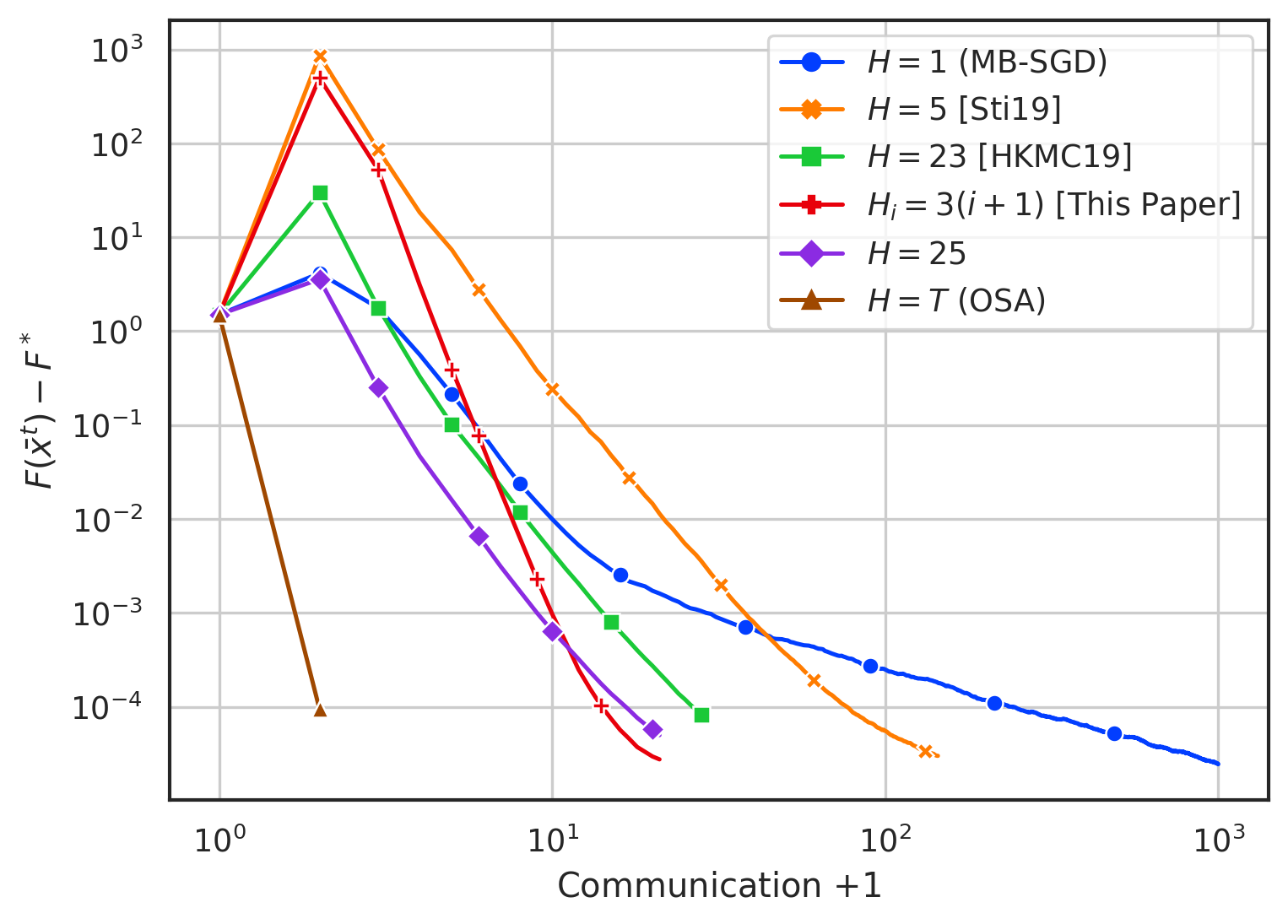}
		\caption{Error over communications.}
	\end{subfigure}
	\begin{subfigure}[b]{0.325\textwidth}
		\includegraphics[width=\textwidth]{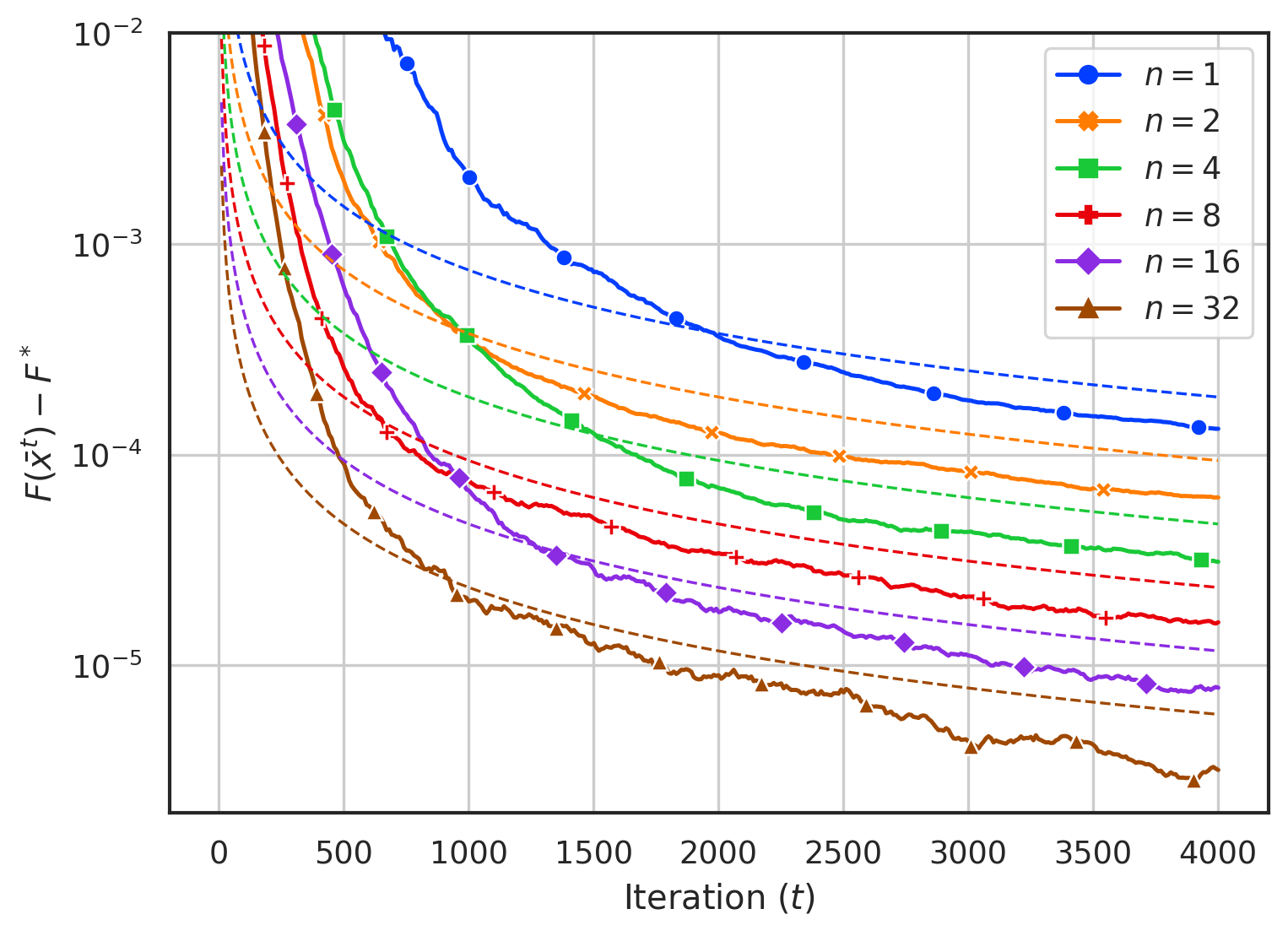}
		\caption{Speed-up in the network size.}
	\end{subfigure}
	\caption{Local SGD with different communication strategies with $F(\bx)=\E_\zeta f(\bx, \zeta)$ defined in \eqref{eq: f-zeta}, $c=9, \sigma=0.5, d=3, \beta=1$. Figures (a) and (b) show the error of different communication methods over iteration and communication round, respectively, with a fixed network size of $n=20$. Figure (c) shows the convergence of Local SGD with the communication method proposed in this paper ($n$ communication rounds) for different network sizes. The dashed lines are showing $\sigma^2/(\mu n t)$.}
	\label{fig: res2}
\end{figure}

Figure \ref{fig: res2}(a) shows that {\em the method with increasing communication intervals ($H_i = 3(i+1)$) proposed in this paper performs better than all the other communication strategies in the transient time as well as in the final error}, requiring much less communication rounds. In particular, the method with the same number of communications but fixed intervals ($H=25$), has both higher transient error and final error. This affirms the advantages of having more frequent communication at the beginning of the optimization. Indeed, observe that in Figures \ref{fig: res2}(a), the only method which outperforms the method we propose is the one that communicates at every step. 

Figure \ref{fig: res2}(b) reveals the effectiveness of each communication round in different methods. We observe that there's an initial spike in the initial communications in methods $H=5$ and $H_i = 3(i+1)$. This is mainly because these two methods have more frequent communications at the beginning of the training, where the step-sizes are larger. Other methods experience this increase as well, however since they communicate later, it's not observed in this figure. Indeed, observe that the only method which makes better use of communication periods than our method in Figure \ref{fig: res2}(b) is one-shot averaging, which is not competitive in terms of its final error.

Figure \ref{fig: res2}(c) verifies that linear-speed up in the number of workers can be achieved with only $R=n$ communication rounds. Moreover, it shows that Local SGD achieves the optimal convergence rate of $\sigma^2/(n \mu T)$ asymptotically.

\subsection{Regularized Logistic Regression}
We also performed additional numerical experiments with regularized logistic regression using two real data sets. Due to space constraints, the results are presented in supplementary information.

\section{Conclusion}\label{sec: conclustion}
We have presented a new  analysis of Local SGD and studied the effect of choice of communication times on the final optimality error. We proposed a communication strategy which achieves linear speed-up in the number of workers with only $\Omega(n)$ communication rounds, independent of the total number of iterations $T$.
Numerical experiments further confirmed our theoretical findings, and showed that our method achieves smaller error than previous methods using fewer communications.

\section*{Broader Impact}

The results presented in this paper could help speed up training in many machine learning applications. The potential broader impacts are therefore somewhat generic for machine learning: this research could amplify  all the benefits ML can bring by making it cheaper in terms of computational cost, while simultaneously amplifying all the ways ML could be misused. 


\begin{ack}

\end{ack}

\small
\bibliographystyle{alpha} 
\bibliography{References}

\newpage
\appendix
\section{Missing Proofs}
Let us define the following notations used in the proofs presented here.
\begin{align*}
    \bbg^t := (\sum_{i=1}^n \bg_i^t)/n, \qquad G^t := \frac{1}{n}\sum_{i=1}^n \Vert \bg_i^t \Vert^2, \qquad & \e_i^t := \hbg_i^t - \bg_i^t.
\end{align*}
Moreover, define $\F^t:=\{\bx_i^k, \hbg_i^k | 1\leq i \leq n, 0\leq k \leq t-1 \} \cup \{\bx_i^t | 1 \leq i \leq n\}$.

\begin{lemmapp}[\ref{lem: error decay}]
    Let Assumptions \ref{as: F} and \ref{as: g} hold. Then,
    \begin{align*}
	    \xi^{t+1} \leq \xi^t(1 - \mu \eta_t) + \frac{L^2 \eta_t}{2n} \E \left[ \sum_{i=1}^n \Vert \bbx^t - \bx_i^t \Vert^2 \right] + \frac{\eta_t^2 L}{2} \E[\Vert \tbg^t \Vert_2^2 ] 
	    - \frac{\eta_t}{2n} \E \left[\sum_{i=1}^n \Vert \nabla F(\bx_i^t) \Vert^2\right].
	\end{align*}
\end{lemmapp}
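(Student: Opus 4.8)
The plan is to derive a one-step descent inequality for the averaged iterate $\bbx^t$ by combining $L$-smoothness of $F$ with the Polyak--Lojasiewicz inequality implied by $\mu$-strong convexity, taking expectations carefully against the history $\F^t$. First I would apply the upper smoothness bound of Assumption \ref{as: F} to the pair $(\bbx^t, \bbx^{t+1})$ and substitute $\bbx^{t+1} - \bbx^t = -\eta_t \tbg^t$ from \eqref{eq: average x update}, giving
\[ F(\bbx^{t+1}) \leq F(\bbx^t) - \eta_t \langle \nabla F(\bbx^t), \tbg^t\rangle + \frac{L\eta_t^2}{2}\Vert \tbg^t\Vert^2. \]
Taking $\E[\,\cdot\mid\F^t]$ and using the unbiasedness of the oracle (Assumption \ref{as: g}), the average $\tbg^t$ has conditional mean $\bbg^t = \frac{1}{n}\sum_{i=1}^n \nabla F(\bx_i^t)$, so the cross term becomes the deterministic $-\eta_t \langle \nabla F(\bbx^t), \bbg^t\rangle$ while $\Vert \tbg^t\Vert^2$ keeps its conditional expectation.

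The key step is to lower bound the inner product $\langle \nabla F(\bbx^t), \bbg^t\rangle$, which enters with a negative sign. Expanding $\bbg^t$ as the average of $\nabla F(\bx_i^t)$ and applying the polarization identity $\langle a,b\rangle = \tfrac12(\Vert a\Vert^2 + \Vert b\Vert^2 - \Vert a-b\Vert^2)$ to each summand, I would bound the discrepancy term $\Vert \nabla F(\bbx^t) - \nabla F(\bx_i^t)\Vert^2 \leq L^2 \Vert \bbx^t - \bx_i^t\Vert^2$ using $L$-smoothness (Lipschitz gradient). Averaging over $i$ then yields
\[ \langle \nabla F(\bbx^t), \bbg^t\rangle \geq \frac{1}{2}\Vert\nabla F(\bbx^t)\Vert^2 + \frac{1}{2n}\sum_{i=1}^n \Vert\nabla F(\bx_i^t)\Vert^2 - \frac{L^2}{2n}\sum_{i=1}^n\Vert\bbx^t - \bx_i^t\Vert^2, \]
which is exactly what produces both the consensus-error term $\frac{L^2\eta_t}{2n}\sum_i\Vert\bbx^t-\bx_i^t\Vert^2$ and the negative sum-of-gradient-norms term in the statement.

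Finally, to convert $-\frac{\eta_t}{2}\Vert \nabla F(\bbx^t)\Vert^2$ into the contraction factor, I would use the PL inequality $\Vert \nabla F(\bbx^t)\Vert^2 \geq 2\mu\,(F(\bbx^t) - F^*)$ valid for $\mu$-strongly convex $F$, turning that term into $-\mu\eta_t\,(F(\bbx^t)-F^*)$. Subtracting $F^*$ from both sides, grouping $(1-\mu\eta_t)(F(\bbx^t)-F^*)$, and taking total expectation via the tower property over $\F^t$ (so $\E[F(\bbx^t)-F^*]=\xi^t$) delivers the claimed recursion, with $\frac{L\eta_t^2}{2}\E[\Vert\tbg^t\Vert^2]$ as the leftover variance term. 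I expect the main obstacle to be the inner-product lower bound: it is essential that $L$-smoothness be used there to trade the \emph{gradient} discrepancy for the \emph{iterate} discrepancy $\Vert\bbx^t-\bx_i^t\Vert^2$, since this is precisely the mechanism that converts worker disagreement into the consensus-error quantity that Lemmas \ref{lem: consensus 1} and \ref{lem: consensus 2} subsequently control.
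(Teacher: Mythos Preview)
Your proposal is correct and follows essentially the same argument as the paper: smoothness at $(\bbx^t,\bbx^{t+1})$, conditional unbiasedness to replace $\tbg^t$ by $\bbg^t$ in the cross term, the polarization identity $\langle a,b\rangle=\tfrac12(\Vert a\Vert^2+\Vert b\Vert^2-\Vert a-b\Vert^2)$, $L$-Lipschitzness of $\nabla F$ for the discrepancy, and the PL inequality for the contraction factor. There is nothing to add.
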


\begin{proof}[Proof of Lemma \ref{lem: error decay}]
	By Assumption \ref{as: F} and \eqref{eq: average x update} we have,
	\begin{align}\label{eq: opt E1}
	\E[F(\bbx^{t+1}) - F(\bbx^t)] \leq -\eta_t \E[\langle \nabla F(\bbx^t), \tbg^t \rangle ] + \frac{\eta_t^2 L}{2} \E[\Vert \tbg^t \Vert_2^2 ].
	\end{align}
	We bound the first term on the R.H.S of \eqref{eq: opt E1} by conditioning on $\F^t$ as follows:
	\begin{align} \label{eq: opt E2}
	\E[\langle  \nabla F(\bbx^t), \tbg^t \rangle | \mathcal{F}^{t} ] &= \frac{1}{n} \sum_{i=1}^n \langle \nabla F(\bbx^t), \E[\hbg_i^t | \bx_i^t] \rangle \nonumber \\
	&= \frac{1}{2} \Vert \nabla F(\bbx^t) \Vert^2 + \frac{1}{2n} \sum_{i=1}^n \Vert \nabla F(\bx_i^t) \Vert^2 - \frac{1}{2n} \sum_{i=1}^n \Vert \nabla F(\bbx^t) - \nabla F(\bx_i^t) \Vert^2  \nonumber\\
	& \geq \mu (F(\bbx^t) - F^*) + \frac{1}{2n} \sum_{i=1}^n \Vert \nabla F(\bx_i^t) \Vert^2  - \frac{L^2}{2n} \sum_{i=1}^n \Vert \bbx^t - \bx_i^t \Vert^2,
	\end{align}
	where we used $\langle a, b \rangle  = \frac{1}{2} \Vert a \Vert ^2 + \frac{1}{2} \Vert b\Vert^2 - \frac{1}{2} \Vert a-b \Vert ^2$ in the second equation and $(1/2)\Vert \nabla F(\bx) \Vert^2 \geq \mu (F(\bx) - F^*)$ as well as smoothness of $F$ in the last inequality.
	Taking full expectation of \eqref{eq: opt E2} and combining it with \eqref{eq: opt E1} concludes the lemma.
\end{proof}

We state an important identity in the following lemma.
\begin{lemma}\label{lem: u - ubar}
    Let $\bu_1, \ldots \bu_n \in \R^d$ be $n$ arbitrary vectors. Define $\bar \bu = (\sum_{i=1}^n \bu_i)/n$. Then,
    \begin{align*}
        \sum_{i=1}^n \Vert \bu_i - \bar \bu \Vert^2 = \sum_{i=1}^n \Vert \bu_i \Vert^2 - n \Vert \bar \bu \Vert^2.
    \end{align*}
\end{lemma}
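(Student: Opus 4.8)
The plan is to prove this standard variance-decomposition identity by direct expansion of the squared norms on the left-hand side. The only algebraic input I need is the defining relation $\sum_{i=1}^n \bu_i = n \bar \bu$, which follows immediately from $\bar \bu = (\sum_{i=1}^n \bu_i)/n$.

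First I would expand each term via $\Vert \bu_i - \bar \bu \Vert^2 = \Vert \bu_i \Vert^2 - 2\langle \bu_i, \bar \bu \rangle + \Vert \bar \bu \Vert^2$ and sum over $i$. This gives
\begin{align*}
\sum_{i=1}^n \Vert \bu_i - \bar \bu \Vert^2 = \sum_{i=1}^n \Vert \bu_i \Vert^2 - 2 \Big\langle \sum_{i=1}^n \bu_i, \bar \bu \Big\rangle + n \Vert \bar \bu \Vert^2,
\end{align*}
where I pulled the constant vector $\bar \bu$ out of the inner product in the cross term and noted that the last term contributes $\Vert \bar \bu \Vert^2$ exactly $n$ times. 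Next I would substitute $\sum_{i=1}^n \bu_i = n \bar \bu$ into the cross term, so that $\langle \sum_{i=1}^n \bu_i, \bar \bu \rangle = n \Vert \bar \bu \Vert^2$. The middle term then becomes $-2n \Vert \bar \bu \Vert^2$, and combining it with the final $+n \Vert \bar \bu \Vert^2$ collapses to $-n \Vert \bar \bu \Vert^2$, yielding the claimed identity.

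There is no genuine obstacle here: the statement is the elementary ``sum of squared deviations equals sum of squares minus $n$ times the squared mean'' fact, and the entire argument is two lines of bilinearity of the inner product together with the definition of $\bar \bu$. The only thing to be careful about is correctly tracking the factor of $n$ produced by the constant term $\Vert \bar \bu \Vert^2$ in the sum.
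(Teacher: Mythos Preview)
Your proof is correct and essentially identical to the paper's: both expand $\Vert \bu_i - \bar \bu \Vert^2$, sum over $i$, and use $\sum_{i=1}^n \bu_i = n\bar\bu$ to collapse the cross term to $-2n\Vert\bar\bu\Vert^2$, which combines with $+n\Vert\bar\bu\Vert^2$ to give the result.
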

\begin{proof}
We have
    \begin{align*}
        \sum_{i=1}^n \Vert \bu_i - \bar \bu \Vert^2 &= \sum_{i=1}^n \Vert \bu_i \Vert^2 + n \Vert \bar \bu \Vert^2 - 2\sum_{i=1}^n \langle \bu_i, \bar \bu \rangle \\
        &= \sum_{i=1}^n \Vert \bu_i \Vert^2 + n \Vert \bar \bu \Vert^2 - 2n \langle \bar \bu, \bar \bu \rangle \\
        & = \sum_{i=1}^n \Vert \bu_i \Vert^2 - n \Vert \bar \bu \Vert^2.
    \end{align*}
\end{proof}

\begin{lemmapp}[\ref{lem: consensus 1}] 
	Let Assumptions \ref{as: F} and \ref{as: g} hold. Then,
	\begin{multline*}
	\E\left[ \sum_{i=1}^n \Vert \bx_i^{t+1} - \bbx^{t+1} \Vert^2 \right] \leq \E\left[ \sum_{i=1}^n \Vert \bx_i^{t} - \bbx^t \Vert^2 \right](1 - 2\eta_t \mu + \eta_t^2 L^2) \\ + (n-1)\eta_t^2\sigma^2 + (1-\frac{1}{n})\eta_t^2 c\E [\sum_{i=1}^n \Vert g_i^t \Vert^2].
	\end{multline*}
\end{lemmapp}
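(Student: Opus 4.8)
The plan is to prove the bound one iteration at a time, splitting into the communication and non-communication cases. If $t+1 \in \I$, then all workers average and $\bx_i^{t+1} = \bbx^{t+1}$ for every $i$, so the left-hand side is zero and the inequality holds trivially because every term on the right is nonnegative. Hence I only need to treat $t+1 \notin \I$, where each worker takes a local step $\bx_i^{t+1} = \bx_i^t - \eta_t \hbg_i^t$ while, by \eqref{eq: average x update}, $\bbx^{t+1} = \bbx^t - \eta_t \tbg^t$. Writing $\hbg_i^t = \bg_i^t + \e_i^t$ and $\bar\e^t := \frac{1}{n}\sum_{i=1}^n \e_i^t$, subtraction gives
\begin{align*}
\bx_i^{t+1} - \bbx^{t+1} = (\bx_i^t - \bbx^t) - \eta_t(\bg_i^t - \bbg^t) - \eta_t(\e_i^t - \bar\e^t).
\end{align*}

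Next I would condition on $\F^t$, so that the first two bracketed terms are deterministic while $\e_i^t - \bar\e^t$ is zero-mean. Expanding the squared norm and summing over $i$, the cross term between the deterministic part and the noise vanishes in conditional expectation since $\E[\e_i^t \mid \F^t] = \mathbf 0$, leaving a clean split into a deterministic contribution $\sum_i \Vert (\bx_i^t - \bbx^t) - \eta_t(\bg_i^t - \bbg^t)\Vert^2$ and a noise contribution $\eta_t^2 \sum_i \E[\Vert \e_i^t - \bar\e^t\Vert^2 \mid \F^t]$.

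For the deterministic contribution I would expand the square into three pieces. The pure consensus term is $\sum_i \Vert \bx_i^t - \bbx^t\Vert^2$. For the cross term I would exploit that $\sum_i(\bx_i^t - \bbx^t) = \mathbf 0$: this lets me insert $\bbg^t$ and $\nabla F(\bbx^t)$ at no cost and rewrite it as $\sum_i \langle \bx_i^t - \bbx^t, \nabla F(\bx_i^t) - \nabla F(\bbx^t)\rangle$, which strong convexity (Assumption \ref{as: F}) lower-bounds by $\mu \sum_i \Vert \bx_i^t - \bbx^t\Vert^2$, producing the $-2\eta_t\mu$ factor. For the squared gradient-difference term, Lemma \ref{lem: u - ubar} applied to $\bg_i^t - \nabla F(\bbx^t)$ gives $\sum_i \Vert \bg_i^t - \bbg^t\Vert^2 \leq \sum_i \Vert \bg_i^t - \nabla F(\bbx^t)\Vert^2$, and then $L$-smoothness bounds this by $L^2 \sum_i \Vert \bx_i^t - \bbx^t\Vert^2$, producing the $+\eta_t^2 L^2$ factor. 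Together these assemble the coefficient $(1 - 2\eta_t\mu + \eta_t^2 L^2)$.

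For the noise contribution I would invoke Lemma \ref{lem: u - ubar} with $\bu_i = \e_i^t$ to write $\sum_i \Vert \e_i^t - \bar\e^t\Vert^2 = \sum_i \Vert \e_i^t\Vert^2 - n\Vert \bar\e^t\Vert^2$; taking conditional expectation and using conditional independence and zero mean to evaluate $\E[\Vert\bar\e^t\Vert^2\mid\F^t] = n^{-2}\sum_i \E[\Vert\e_i^t\Vert^2\mid\F^t]$ yields the factor $(1 - 1/n)$. Assumption \ref{as: g} then bounds each $\E[\Vert\e_i^t\Vert^2\mid\F^t]$ by $c\Vert\bg_i^t\Vert^2 + \sigma^2$, so the constant part contributes $(1-1/n)n\sigma^2 = (n-1)\sigma^2$ and the gradient part contributes $(1-1/n)c\sum_i\Vert\bg_i^t\Vert^2$, and taking full expectation delivers the claim. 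I expect the main obstacle to be the bookkeeping in the cross term, namely applying the $\sum_i(\bx_i^t-\bbx^t)=\mathbf 0$ cancellation correctly so that strong convexity acts on a genuine gradient difference, while simultaneously tracking conditional independence to land the precise $(1-1/n)$ and $(n-1)$ constants rather than looser ones.
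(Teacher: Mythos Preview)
Your proposal is correct and follows essentially the same bias--variance decomposition as the paper's proof: separate the zero-mean noise from the deterministic drift, bound the drift using strong convexity for the cross term and $L$-smoothness for the gradient-difference term, and handle the noise via Lemma~\ref{lem: u - ubar} together with conditional independence to extract the $(1-1/n)$ factor. The only cosmetic difference is that the paper routes the strong-convexity and smoothness steps through pairwise sums $\frac{1}{n}\sum_{\{i,j\}}\langle \bg_i^t-\bg_j^t,\bx_i^t-\bx_j^t\rangle$ rather than your centering trick at $\bbx^t$, but the two are equivalent via the identity $\sum_i\Vert\bu_i-\bar\bu\Vert^2=\frac{1}{2n}\sum_{i,j}\Vert\bu_i-\bu_j\Vert^2$.
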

\begin{proof}[Proof of Lemma \ref{lem: consensus 1}]
	We have,
	\begin{align}\label{eq: consensus var}
	\E [  \sum_{i=1}^{n} \Vert \bx_i^{t+1} - \bbx^{t+1} \Vert^2 ] = \sum_{i=1}^{n} \Vert \E[\bx_i^{t+1} - \bbx^{t+1}] \Vert^2 + \sum_{i=1}^{n} \E \left[\Vert \bx_i^{t+1} - \bbx^{t+1} - \E[\bx_i^{t+1} - \bbx^{t+1}] \Vert^2 \right].
	\end{align}
	Let us consider the first term on the right hand side of \eqref{eq: consensus var}. Taking conditional expectation of both sides of \eqref{eq: average x update} implies,
	\begin{align} \label{eq: consensus 1}
	\sum_{i=1 }^{n} \Vert \E[\bx_i^{t+1} - \bbx^{t+1} | \ \F^{t}] \Vert^2 &= \sum_{i =1}^{n} \Vert \bx_i^t - \bbx^t - \eta_t(\bg_i^t - \bar \bg^t) \Vert^2 
	\nonumber \\
	&= \sum_{i=1}^{n} \left(\Vert \bx_i^t - \bbx^t \Vert^2 + \eta_t^2\Vert \bg_i^t - \bar \bg^t \Vert^2 - 2 \eta_t \langle \bg_i^t, \bx_i^t - \bbx^t \rangle \right)
	\end{align}
	By $L$-smoothness of $F$,
	\begin{align} \label{eq: consensus L}
	\sum_{i=1}^n \Vert \bg_i^t - \bar \bg^t \Vert^2 = \frac{1}{n} \sum_{\{i,j\}} \Vert \bg_i^t - \bg_j^t \Vert^2 \leq \frac{L^2}{n} \sum_{\{i,j\}} \Vert \bx_i^t - \bx_j^t \Vert^2 = L^2 \sum_{i=1}^n \Vert \bx_i^t - \bbx^t \Vert^2.
	\end{align}
	Moreover, by $\mu$-strong convexity of $F$,
	\begin{multline} \label{eq: consensus mu}
	\sum_{i=1}^n \langle \bg_i^t, \bx_i^t - \bbx^t \rangle = \sum_{i=1}^n  \langle \bg_i^t, \frac{1}{n} \sum_{j=1}^n (\bx_i^t - \bx_j^t) \rangle \\
	= \frac{1}{n} \sum_{\{i,j\}} \langle \bg_i^t - \bg_j^t, \bx_i^t - \bx_j^t \rangle \geq \frac{\mu}{n} \sum_{\{i,j\}} \Vert \bx_i - \bx_j \Vert^2 = \mu \sum_{i=1}^{n} \Vert \bx_i^t - \bbx^t \Vert^2,
	\end{multline}
	where we used $\langle \nabla F(\bx) - \nabla F(\by), \bx - \by \rangle \geq \mu \Vert \bx - \by \Vert^2$ in the inequality.
	Combining \eqref{eq: consensus 1}-\eqref{eq: consensus mu} we obtain,
	\begin{align*}
	\sum_{i=1 }^{n} \Vert \E[\bx_i^{t+1} - \bbx^{t+1} | \F^t] \Vert^2 &\leq \sum_{i=1}^{n} \Vert \bx_i^t - \bbx^t \Vert^2 \left( 1 - 2\eta_t \mu + \eta_t^2L^2 \right), 
	\end{align*}
	Now, consider the second term on the right hand side of \eqref{eq: consensus var}.
	We have,
	\begin{align*}
	    \sum_{i=1}^{n} \E \left[\left\Vert \bx_i^{t+1} - \bbx^{t+1} - \E[\bx_i^{t+1} - \bbx^{t+1}] \right\Vert^2 | \F^t \right] &= 
	    \sum_{i=1}^{n} \E \left[\left\Vert \bx_i^{t+1} - \E[\bx_i^{t+1}] - (\bbx^{t+1} - \E[\bbx^{t+1}]) \right\Vert^2 | \F^t \right] \\
	    &= \eta_t^2 \sum_{i=1}^{n} \E \left[ \left\Vert \e_i^t - \bar \e^t \right\Vert^2 | \F^t \right] \\
	    &= \eta_t^2 \left(\sum_{i=1}^{n} \E \left[ \left\Vert \e_i^t \right\Vert^2 | \F^t \right]- n \E \left[ \left\Vert \bar \e^t \right\Vert^2 |\F^t \right]\right) \\
	    &= \eta_t^2 \sum_{i=1}^{n} \E \left[ \left\Vert \e_i^t \right\Vert^2 | \F^t \right](1-\frac{1}{n}) \\
	    &\leq (n-1)\eta_t^2\sigma^2 + (1-\frac{1}{n})\eta_t^2 c\sum_{i=1}^n \Vert g_i^t \Vert^2,
	\end{align*}
	where $\e_i^t$ are defined at the beginning of this section and $\bar \e^t := (\sum_{i=1}^n \e_i^t)/n$ and we used Lemma \ref{lem: u - ubar} in the third equation and the conditional independence of $\e_i^t$ to use $\E[\Vert \bar \e^t \Vert^2 | \F^t] = (1/n^2)\sum_{i=1}^n \E[\Vert \e_i^t \Vert^2 | \F^t]$ in the last equality.
	Taking full expectation of the two relations above with respect to $\F^t$ and combining them with \eqref{eq: consensus var} completes the proof.
\end{proof}

\begin{lemmapp}[\ref{lem: consensus 2}]
	Let assumptions of Theorem \ref{thm: general} hold. Then,
	\begin{align*}
	\E\left[ \sum_{i=1}^n \Vert \bx_i^{t} - \bbx^t \Vert^2 \right] \leq
    9(n-1)\sum_{k=\tau(t)}^{t-1}\frac{c\E[G^k]+\sigma^2}{\mu^2(t+\beta)^2}.
	\end{align*}
\end{lemmapp}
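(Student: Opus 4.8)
The plan is to convert the one-step growth estimate of Lemma~\ref{lem: consensus 1} into a closed-form bound by unrolling it back to the most recent communication time, at which the consensus error is zero. Write $C^t := \E[\sum_{i=1}^n \Vert \bx_i^t - \bbx^t \Vert^2]$ for brevity. First I would rewrite the last term of Lemma~\ref{lem: consensus 1} using the definition $G^t = \frac1n\sum_{i=1}^n \Vert \bg_i^t\Vert^2$, so that $\E[\sum_{i=1}^n \Vert \bg_i^t\Vert^2] = n\,\E[G^t]$ and $(1-\tfrac1n)\eta_t^2 c\cdot n\,\E[G^t] = (n-1)\eta_t^2 c\,\E[G^t]$. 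The recursion then reads $C^{t+1} \le A_t\,C^t + (n-1)\eta_t^2\big(\sigma^2 + c\,\E[G^t]\big)$ with $A_t := 1 - 2\eta_t\mu + \eta_t^2 L^2$. Since all workers share a common iterate at every $t'\in\I$, we have $C^{\tau(t)} = 0$, so unrolling from $k=\tau(t)$ up to $t$ gives $C^t \le (n-1)\sum_{k=\tau(t)}^{t-1}\eta_k^2\big(\sigma^2 + c\,\E[G^k]\big)\prod_{j=k+1}^{t-1}A_j$.

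Next I would control the contraction factor. With $\eta_t = 2/(\mu(t+\beta))$ one has $2\eta_t\mu = 4/(t+\beta)$ and $\eta_t^2 L^2 = 4\kappa^2/(t+\beta)^2$; since $\beta \ge 2\kappa^2$ forces $t+\beta\ge 2\kappa^2$, the quadratic term is dominated by the linear one, $\eta_t^2 L^2 \le 2/(t+\beta)$, yielding $A_t \le 1 - 2/(t+\beta)$. The crux is then to bound the product $\prod_{j=k+1}^{t-1}\big(1 - 2/(j+\beta)\big)$. Here I would use $1-x\le e^{-x}$ together with the integral lower bound $\sum_{j=k+1}^{t-1}\frac{1}{j+\beta}\ge \int_{k+1}^{t}\frac{dx}{x+\beta} = \ln\frac{t+\beta}{k+1+\beta}$, which gives $\prod_{j=k+1}^{t-1}\big(1-\frac{2}{j+\beta}\big) \le \big(\frac{k+1+\beta}{t+\beta}\big)^2$ (the empty product at $k=t-1$ matches this bound trivially).

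Finally, substituting $\eta_k^2 = 4/(\mu^2(k+\beta)^2)$, the coefficient of $(n-1)\big(\sigma^2 + c\,\E[G^k]\big)$ in the unrolled sum is at most $\frac{4}{\mu^2(k+\beta)^2}\big(\frac{k+1+\beta}{t+\beta}\big)^2 = \frac{4}{\mu^2(t+\beta)^2}\big(1+\frac{1}{k+\beta}\big)^2$. Because $k+\beta \ge \beta \ge 2\kappa^2 \ge 2$, the factor $\big(1+\frac{1}{k+\beta}\big)^2 \le (3/2)^2 = 9/4$, so each coefficient is at most $9/(\mu^2(t+\beta)^2)$; summing over $k$ from $\tau(t)$ to $t-1$ reproduces exactly the claimed bound.

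I expect the main obstacle to be pinning down the constant $9$ rather than any structural difficulty. The argument hinges on the interplay between $\beta \ge 2\kappa^2$, needed to absorb the $\eta_t^2 L^2$ term into a clean $1 - 2/(t+\beta)$ contraction, and $\beta \ge 2$, needed for the $(1+1/(k+\beta))^2 \le 9/4$ step. Using the smooth exponential/integral estimate for the product is what keeps the constant uniform in $t$; a cruder treatment — for instance telescoping the product exactly and then discarding the factor $\tfrac{k+\beta-1}{k+\beta}\le 1$ — degrades to a bound that only holds once $t+\beta$ is sufficiently large, and so would fail at the earliest iterations after a communication.
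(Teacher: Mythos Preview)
Your argument is correct and essentially identical to the paper's: the same unrolling back to $\tau(t)$, the same reduction $A_t\le 1-2/(t+\beta)$ via $\beta\ge 2\kappa^2$, the same $\ln(1-x)\le -x$ plus integral comparison to get $\prod_{j=k+1}^{t-1}(1-2/(j+\beta))\le\big(\tfrac{k+1+\beta}{t+\beta}\big)^2$, and the same $(1+1/(k+\beta))^2\le 9/4$ step using $\beta\ge 2$. The only cosmetic difference is that the paper packages the product bound as a separate lemma (Lemma~\ref{lem: products}) whereas you derive it inline.
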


Before proving this lemma, let us state and prove the following lemma.
\begin{lemma}\label{lem: products}
	Let $b \geq a > 2$ be integers. Define $\Phi(a,b) = \prod_{i=a}^b \left( 1 - \frac{2}{i} \right)$. We then have $
	\Phi(a,b) \leq \left( \frac{a}{b+1} \right)^{2}.$
\end{lemma}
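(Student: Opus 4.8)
The plan is to prove the bound $\Phi(a,b) = \prod_{i=a}^b (1 - 2/i) \leq (a/(b+1))^2$ by telescoping. First I would rewrite each factor as $1 - 2/i = (i-2)/i$, so that
\begin{align*}
\Phi(a,b) = \prod_{i=a}^b \frac{i-2}{i}.
\end{align*}
The key observation is that this is a ratio of two shifted factorial-type products: the numerators run over $a-2, a-1, \ldots, b-2$ while the denominators run over $a, a+1, \ldots, b$. Writing these out, most terms should cancel in a telescoping fashion, leaving only boundary factors.

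The main step is to make the telescoping precise. Since $\prod_{i=a}^b (i-2) = \frac{(b-2)!}{(a-3)!}$ and $\prod_{i=a}^b i = \frac{b!}{(a-1)!}$ (valid because $a > 2$ guarantees $a - 3 \geq 0$), I would compute
\begin{align*}
\Phi(a,b) = \frac{(b-2)!/(a-3)!}{b!/(a-1)!} = \frac{(b-2)!\,(a-1)!}{b!\,(a-3)!} = \frac{(a-1)(a-2)}{b(b-1)}.
\end{align*}
Thus the product collapses exactly to $(a-1)(a-2)/(b(b-1))$. The remaining task is purely to verify the inequality $(a-1)(a-2)/(b(b-1)) \leq a^2/(b+1)^2$, which reduces to an elementary comparison of integers.

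For that final comparison I would argue that $(a-1)(a-2) \leq a^2$ trivially (indeed $(a-1)(a-2) = a^2 - 3a + 2 \leq a^2$ whenever $a \geq 1$), and separately that $(b+1)^2 \leq b(b-1)$ is \emph{false}, so the two fractions are not compared factor-by-factor; instead I would show $(a-1)(a-2)(b+1)^2 \leq a^2 b(b-1)$ directly. A cleaner route avoids this: note $(a-1)(a-2) < a^2$ and it suffices to show $1/(b(b-1)) \leq \big((a/(a-1))\cdot\text{something}\big)$, so the more robust approach is to bound each piece as $(a-1)(a-2) \le (a-1)^2 \le a^2$ and $b(b-1) \ge (b+1)^2$ — but since the latter fails, I expect the honest argument to cross-multiply and check $(a-1)(a-2)(b+1)^2 \le a^2 b (b-1)$ as a polynomial inequality in $a,b$ under the constraint $b \ge a > 2$, which holds because increasing the gap $b - a$ only helps and the base case $b = a$ reduces to $(a-1)(a-2)(a+1)^2 \le a^3(a-1)$, i.e. $(a-2)(a+1)^2 \le a^3$, verifiable by expansion. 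The main obstacle is therefore not the telescoping (which is clean) but choosing the tightest correct form of the final elementary inequality so that the crude bound $(a/(b+1))^2$ actually dominates the exact value $(a-1)(a-2)/(b(b-1))$; I would double-check the boundary case $b = a$ carefully since that is where the inequality is tightest.
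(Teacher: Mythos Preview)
Your telescoping approach is correct and genuinely different from the paper's. The paper takes logarithms, applies $\ln(1-x)\le -x$ to obtain $\ln\Phi(a,b)\le -2\sum_{i=a}^b 1/i$, and then lower-bounds the harmonic sum by the integral $\int_a^{b+1}dx/x=\ln((b+1)/a)$; exponentiating gives the claim. Your route instead computes $\Phi(a,b)$ \emph{exactly} as $(a-1)(a-2)/\bigl(b(b-1)\bigr)$ via the factorial cancellation, which is sharper and more elementary --- the paper's argument discards information twice (once at the log inequality, once at the integral comparison), whereas you lose nothing until the final step. The trade-off is that the paper's method generalizes immediately to $\prod_{i=a}^b(1-\alpha/i)\le (a/(b+1))^{\alpha}$ for any $\alpha>0$, while your telescoping is specific to the integer shift $2$.

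The only wobbly part is your final comparison: the detour through ``$(b+1)^2\le b(b-1)$ is false, so\dots'' and the base-case-plus-monotonicity sketch is more work than needed (and the monotonicity of $b(b-1)/(b+1)^2$ in $b$ is asserted but not verified). A cleaner finish: both
\[
\frac{a-1}{a}\le\frac{b}{b+1}\qquad\text{and}\qquad\frac{a-2}{a}\le\frac{b-1}{b+1}
\]
reduce, after cross-multiplying, to $a\le b+1$, which holds since $a\le b$. Multiplying these two inequalities gives $(a-1)(a-2)/a^2\le b(b-1)/(b+1)^2$, which is exactly the bound you need.
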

\begin{proof}
	Indeed,
	\begin{align*}
	\ln (\Phi(a,b))  =  \sum_{i=a}^b \ln \left( 1 - \frac{2}{i} \right)   
	\leq  \sum_{i=a}^b - \frac{2}{i} 
	\leq  -  2\left[ \ln (b+1) - \ln (a) \right].
	\end{align*}
	where we used the inequality $\ln (1-x) \leq -x$ as well as the standard technique of viewing $\sum_{i=a}^b 1/i$ as a Riemann sum for $\int_{a}^{b+1} 1/x ~dx$ and observing that the Riemann sum overstates the integral. Exponentiating both sides now implies the lemma.
\end{proof}

\begin{proof}[Proof of Lemma \ref{lem: consensus 2}]
	Define $a^k =  \E\left[ \sum_{i=1}^n \Vert \bx_i^{k} - \bbx^k \Vert^2 \right]$ and $\Delta_k = (1 - 2\eta_k \mu + \eta_k^2 L^2)$ for $k\geq 0$ . By Lemma \ref{lem: consensus 1},
	\begin{align*}
	a^{t} &\leq \Delta_{t-1}a^{t-1}  + \eta_{t-1}^2(n-1)(\sigma^2 + c \E[G^{t-1}]) \\
	& \leq \Delta_{t-1}(\Delta_{t-2}a^{t-2} + \eta_{t-2}^2(n-1)(\sigma^2+c\E[G^{t-2}])) + \eta_{t-1}^2(n-1)(\sigma^2+c\E[G^{t-1}]) \\
	& \leq \ldots \leq \prod_{k=\tau(t)}^{t-1} \Delta_k a^{\tau(t)} + 
	(n-1)\sum_{k=\tau(t)}^{t-1} \eta_k^2(\sigma^2+c\E[G^k])\prod_{i=k+1}^{t-1} \Delta_i \\
	&= (n-1)\sum_{k=\tau(t)}^{t-1} \eta_k^2(\sigma^2+c\E[G^k])\prod_{i=k+1}^{t-1} \Delta_i,
	\end{align*}
	where we used $a^{\tau(t)}= 0$ in the last equation. 
	By the choice of stepsize and $\beta\geq 2\kappa^2$,  we have 
	\begin{align*}
	    \Delta_k = 1-\frac{4}{(k+\beta)} + \frac{4L^2}{\mu^2 (k+\beta)^2} \leq 1 - \frac{4}{k+\beta} + \frac{4\kappa^2}{(k+\beta)\beta}
	    \leq 1 - \frac{4}{k+\beta} + \frac{2}{(k+\beta)} =  1 - \frac{2}{k+\beta}.
	\end{align*}
	Therefore, by Lemma \ref{lem: products}, 
	\begin{align*}
	a^t \leq (n-1) \sum_{k = \tau(t)}^{t-1} \frac{4(\sigma^2+c\E[G^k])}{\mu^2 (k+\beta)^2 } \frac{(k+\beta+1)^2}{(t + \beta)^2} \leq (n-1) \sum_{k = \tau(t)}^{t-1} \frac{9(\sigma^2+c\E[G^k])}{\mu^2 (t+\beta)^2},
	\end{align*}
	where we used $(k+\beta + 1)/(k+\beta)\leq (\beta+1)/\beta \leq 3/2$ since $\beta \geq 2\kappa^2 \geq 2$.
\end{proof}

\begin{proof}[Proof of Lemma \ref{lem: variance}]
    We have,
    \begin{align*}
        \E[\Vert \tbg^t \Vert^2|\F^t] = \E[\Vert \bbg^t + \bar \e^t \Vert^2 | \F_t] = \Vert \bbg^t \Vert^2 + \E[\Vert \bar \e^t \Vert^2| \F^t] \leq  \frac{1}{n}\sum_{i=1}^n \Vert \bg_i^t \Vert^2 + \frac{1}{n^2}\sum_{i=1}^n(\sigma^2 + c\Vert \bg_i^t \Vert^2),
    \end{align*}
    where in the last inequality we used Lemma \ref{lem: u - ubar} and the conditional independency of $\e_i^t$ to separate the noise terms.
\end{proof}

\begin{proof}[Proof of Theorem \ref{thm: general}]
	Combining Equations Lemmas \ref{lem: error decay}-\ref{lem: variance} and plugging $\eta_t = 2/(\mu(t+\beta))$ we obtain
	\begin{multline*}
	    \xi^{t+1} \leq \xi^t(1 - \mu \eta_t) + 
	    \frac{9L^2}{\mu^3(t+\beta)^3} 
	    \sum_{k=\tau(t)}^{t-1} (c\E[G^k]+\sigma^2) 
	    \\
	    + \frac{2L}{\mu^2(t+\beta)^2} \left( (1+\frac{c}{n}) \E[G^t] + \frac{\sigma^2}{n} \right)
	    - \frac{1}{\mu(t+\beta)} \E[G^t].
	\end{multline*}
	Let us multiply both sides of relation above by $(t+\beta)^2$ 
	and use the following inequality
	\begin{align*}
	    (1-\mu \eta_t)(t+\beta)^2 = (1 - \frac{2}{t+\beta})(t+\beta)^2 = (t+\beta)^2 - 2(t+\beta) < (t+\beta-1)^2,
	\end{align*}
	to obtain,
	\begin{multline*}
	\xi^{t+1} (t+\beta)^2 \leq \xi^t (t+\beta-1)^2 + \frac{2L \sigma^2}{n \mu^2} + 
	\frac{9L^2}{\mu^3(t+\beta)} \sum_{k=\tau(t)}^{t-1} (c\E[G^k]+\sigma^2) \\ + 
	\left( \frac{2L}{\mu^2}(1+\frac{c}{n}) - \frac{t+\beta}{\mu} \right)\E[G^t].
	\end{multline*}
	Summing relation above for $t=\tau_i,\ldots,\tau_{i+1}-1$, where $\tau_i,\tau_{i+1}\in \I$ are two consecutive communication times, implies,
	\begin{multline*}
	\xi^{\tau_{i+1}}(\tau_{i+1} + \beta -1 )^2 \leq \xi^{\tau_i}(\tau_{i} + \beta -1 )^2  +  \frac{2L \sigma^2}{n \mu^2}(\tau_{i+1} - \tau_i)
	+ \frac{9L^2\sigma^2}{\mu^3}\sum_{t=\tau_i}^{\tau_{i+1}-1} \frac{t-\tau_i}{t+\beta} \\
	+ \sum_{t=\tau_i}^{\tau_{i+1}-1}\E[G^t] \left( \sum_{k=t+1}^{\tau_{i+1}-1}\frac{9L^2 c}{\mu^3(k+\beta)} + \frac{2L}{\mu^2}(1+\frac{c}{n}) - \frac{t+\beta}{\mu} \right).
	\end{multline*}
	Each of the coefficients of $\E[G^t]$ in above can be bounded by,
	\begin{align*}
	    \sum_{k=t+1}^{\tau_{i+1}-1}\frac{9L^2 c}{\mu^3(k+\beta)} + \frac{2L}{\mu^2}(1+\frac{c}{n}) - \frac{t+\beta}{\mu} &\leq 
	    \frac{9L^2c}{\mu^3}\ln(\frac{\tau_{i+1}+\beta -1 }{\tau_i + \beta})
	    + \frac{2L}{\mu^2}(1+\frac{c}{n}) - \frac{\tau_i + 1 +\beta}{\mu} \\
	    & = \frac{1}{\mu}\left( 9\kappa^2c \ln(1 + \frac{H_i - 1}{\tau_i + \beta}) + 2\kappa(1+\frac{c}{n}) - (\tau_i +1 + \beta)\right) \\
	    &\leq 0,
	\end{align*}
	where we used $\sum_{k=t_1+1}^{t_2} 1/k \leq \int_{t_1}^{t_2} dx/x = \ln(t_2/t_1)$ in the first inequality and the last inequality comes from the assumption of the theorem. Now that the coefficients of $\E[G^k]$ are non-positive, we can simply ignore them and obtain,
	\begin{align*}
	\xi^{\tau_{i+1}}(\tau_{i+1} + \beta -1 )^2 \leq \xi^{\tau_i}(\tau_{i} + \beta -1 )^2  +  \frac{2L \sigma^2}{n \mu^2}(\tau_{i+1} - \tau_i)
	+ \frac{9L^2\sigma^2}{\mu^3}\sum_{t=\tau_i}^{\tau_{i+1}-1} \frac{t-\tau_i}{t+\beta}.
	\end{align*}
	Recursing relation above for $i=0,\ldots,R-1$ implies,
	\begin{align*}
	    \xi^{T}(T + \beta -1 )^2 \leq \xi^{0}( \beta -1 )^2  +  \frac{2L \sigma^2}{n \mu^2}T
	+ \frac{9L^2\sigma^2}{\mu^3}\sum_{t=0}^{T-1} \frac{t-\tau(t)}{t+\beta}.
	\end{align*}
	Dividing both sides by $(T+\beta -1 )^2$ concludes the proof.
\end{proof}

\begin{proof}[Proof of Theorem \ref{thm: logT}]
	We have,
	\begin{align*}
	\tau_j = \tau_0 + \sum_{i=0}^{j-1} H_i = a \frac{j(j+1)}{2}, \qquad j=0,\ldots,k-1.
	\end{align*}
	Hence,
	\begin{align*}
	    1 + \frac{H_0-1}{\tau_0 + \beta} &= 1 + \frac{a-1}{\beta} \leq 1 + \frac{T}{R^2\kappa^2},\\
	    1 + \frac{H_i-1}{\tau_i + \beta} &\leq 1 + \frac{a(i+1)}{\frac{ai(i+1)}{2}} \leq 3, \qquad i\geq 1.
	\end{align*}
	Thus, $9\kappa^2c \ln(1 + \frac{H_i - 1}{\tau_i + \beta}) + 2\kappa(1+\frac{c}{n}) - (\tau_i +1 + \beta)\leq 0, i=0,\ldots,R-1$ and we can use Theorem \ref{thm: general}.
	Moreover,
	\begin{align*}
	\sum_{t=0}^{T-1} \frac{t-\tau(t)}{t+\beta} &\leq \sum_{j=0}^{R-1} \sum_{i=1}^{H_j - 1} \frac{i}{\tau_j + i + \beta } \leq H_0 + \sum_{j=1}^{R-1} \sum_{i=1}^{H_j - 1} \frac{i}{\tau_j + 1 + \beta } \\
	&= a + \sum_{j=1}^{R-1} \frac{H_j (H_j - 1)}{2(\tau_j + 1 + \beta)} = a + \sum_{j=1}^{R-1} \frac{a(j+1) (a(j+1) - 1)}{a j(j+1) + 2(1 + \beta)} \\
	&\leq a + \sum_{j=1}^{R-1} \frac{a^2 (j+1)^2 }{aj(j+1)} \leq 2aR.
	\end{align*}
    Plugging the values of $R$ and $a$ implies,
	\[ \sum_{t=0}^{T-1} \frac{t-\tau(t)}{t+\beta} \leq 2aR \leq 2(\frac{2T}{R^2}+1)R = \frac{4T}{R} + 2R \leq \frac{4T}{R} + \frac{4T}{R} = \frac{8T}{R}, \]
    where we used $R\leq \sqrt{2T}$ in the last inequality. Using the relation above together with Theorem \ref{thm: general} concludes the proof.
\end{proof}

\newpage
\section{More Numerical Experiments}
In this section we present more numerical experiments as well as discussion on how different hyper-parameters were selected.

\subsection{An Experiment With Real Data: Logistic Regression for Hospitalization Prediction}
We consider binary classification and select $l_2$-regularized logistic regression with its corresponding loss function as the objective function $F$ to be minimized, i.e.,
\begin{align*}
	F(\bx) = \frac{1}{N} \sum_{j=1}^N \left( \ln(1+\exp(\bx^\top \mathbf A_j)) - 1_{(b_j = 1)} \bx^\top \mathbf A_j \right) 
	+ \frac{\lambda}{2} \Vert \bx \Vert_2^2,
\end{align*}
where $\lambda$ is the regularization parameter, $\mathbf A_j \in \R^d$ and $b_j \in \{ 0,1 \}$, $j=1,\ldots,N$ are features (data points) and their corresponding class labels, respectively.
We used a real data set from the American College of Surgeons National Surgical Quality Improvement Program (NSQIP) to predict whether a specific patient will be re-admitted within 30 days from discharge after general surgery. This data set consists of $N = 722,101$ data points for training with $d=231$ features including (i) baseline demographic and health care status characteristics, (ii) procedure information and (iii) pre-operative, intra-operative, and post-operative variables.

We perform Local SGD with $n=20$ workers, $\lambda = 0.05$, $\beta = 1$, $T=500$ iterations and batch size of $b=1$  with four different communication strategies for $H$:
(i) one from \cite{stich2018local} with the choice of $\sqrt{T/(bn)} \approx 7$, (ii) one from \cite{haddadpour2019local} with the choice of $T^{2/3}/(nb)^{1/3} \approx 36$, (iii) a strategy with the time varying communication intervals with $H_i = a(i+1), a = 5$ and $R = 20$ communication rounds proposed in this paper, (iv) a strategy with the same number of communications however with a fixed $H=T/n = 50$, and finally, (v) selecting $H = T$ for one-shot averaging.
Each simulation has been repeated $10$ times and the average of their performance is reported in Figure \ref{fig: res1}.

\begin{figure}
	\centering
	\begin{subfigure}[b]{0.49\textwidth}
		\includegraphics[width=\textwidth]{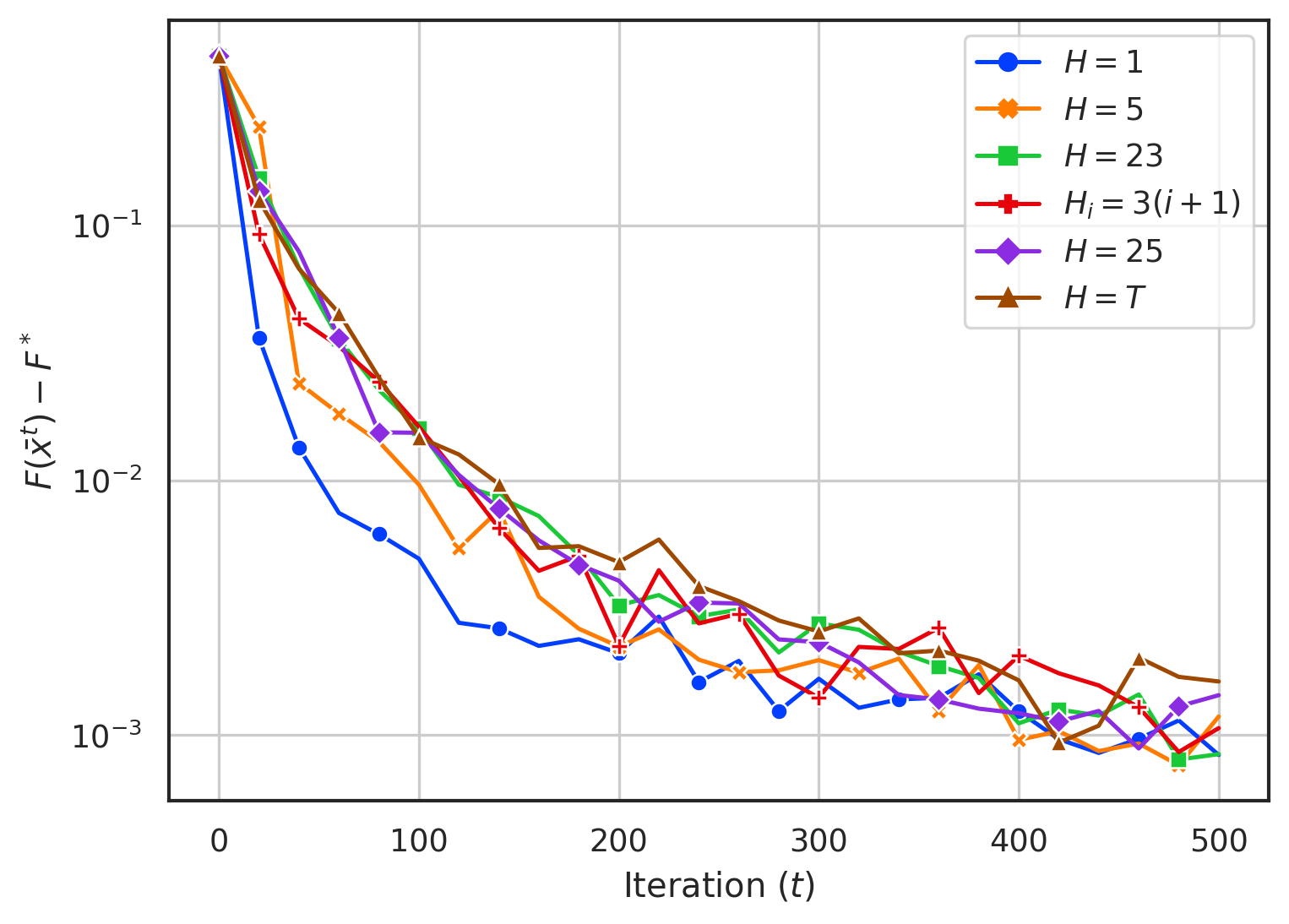}
		\caption{Error over iterations.}
	\end{subfigure}
	\begin{subfigure}[b]{0.49\textwidth}
		\includegraphics[width=\textwidth]{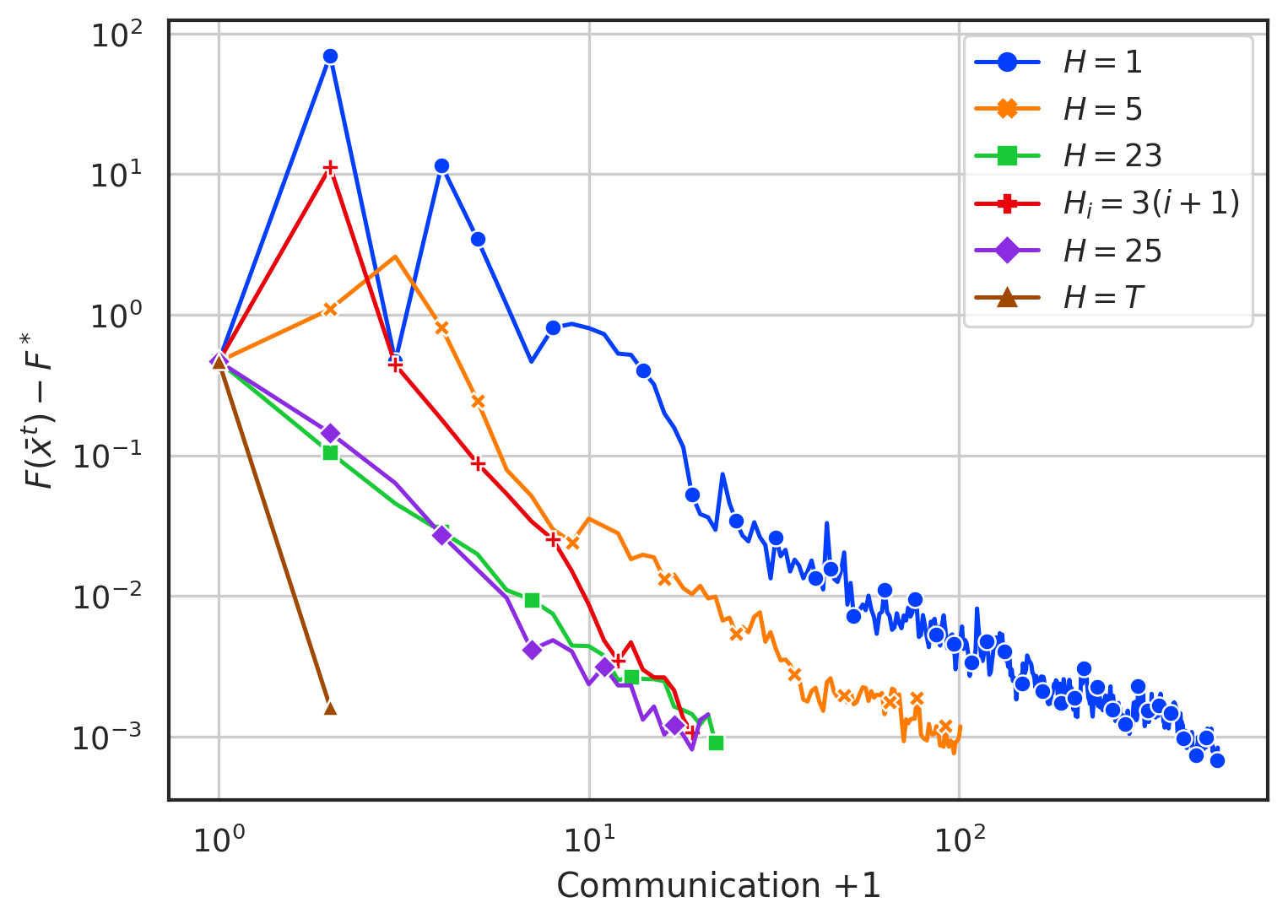}
		\caption{Error over communication rounds.}
	\end{subfigure}
	\caption{Local SGD with different communication strategies on the NSQIP data set.}
	\label{fig: res1}
\end{figure}

It can be seen that all four communication strategies have similar behavior over the number of iterations. However, the methods proposed in this paper reach the same error level with much less communication rounds, i.e., $20$ versus $143$ (\cite{haddadpour2019local}) or $28$(\cite{stich2018local}).
Surprisingly, one-shot averaging performs just as well as synchronized SGD. This could be due to the fact that our bounds analyze the worst-case scenario. Studying one-shot averaging and cases where it performs well is out of the scope of this paper and is left to future work.

We also notice that, as we get closer to the end of training, the methods with fixed-length communication intervals, have smaller improvement with each communication (see Figure \ref{fig: res1}-b). However, with the growing communication interval suggested in this paper, each communication decreases the error significantly. This further confirms that less frequent communication is needed towards the end of training.

\subsection{Logistic Regression on a9a Data Set}
Here we repeat the experiment above on the a9a data set from LIBSVM \cite{CC01a}. This data set consists of $N=32561$ data points for training with $d = 123$ features.
We use same parameters as we did for (NSQIP) data set, except this time we repeat each training $50$ times due to smaller size of the data set. The results are presented in Figure \ref{fig: res3}. Here, we observe a similar performance of different communication strategies.

\begin{figure}
	\centering
	\begin{subfigure}[b]{0.49\textwidth}
		\includegraphics[width=\textwidth]{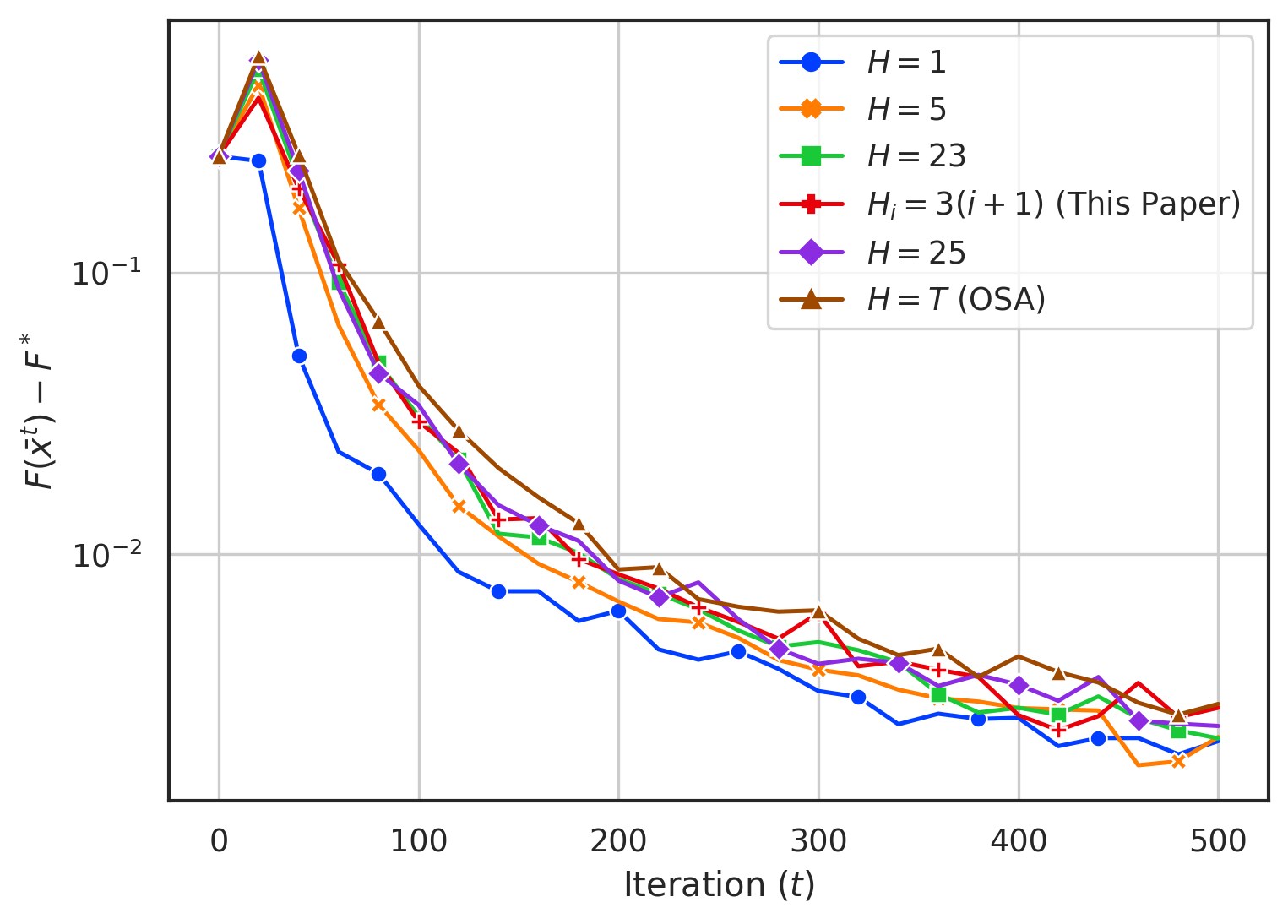}
		\caption{Error over iterations.}
	\end{subfigure}
	\begin{subfigure}[b]{0.49\textwidth}
		\includegraphics[width=\textwidth]{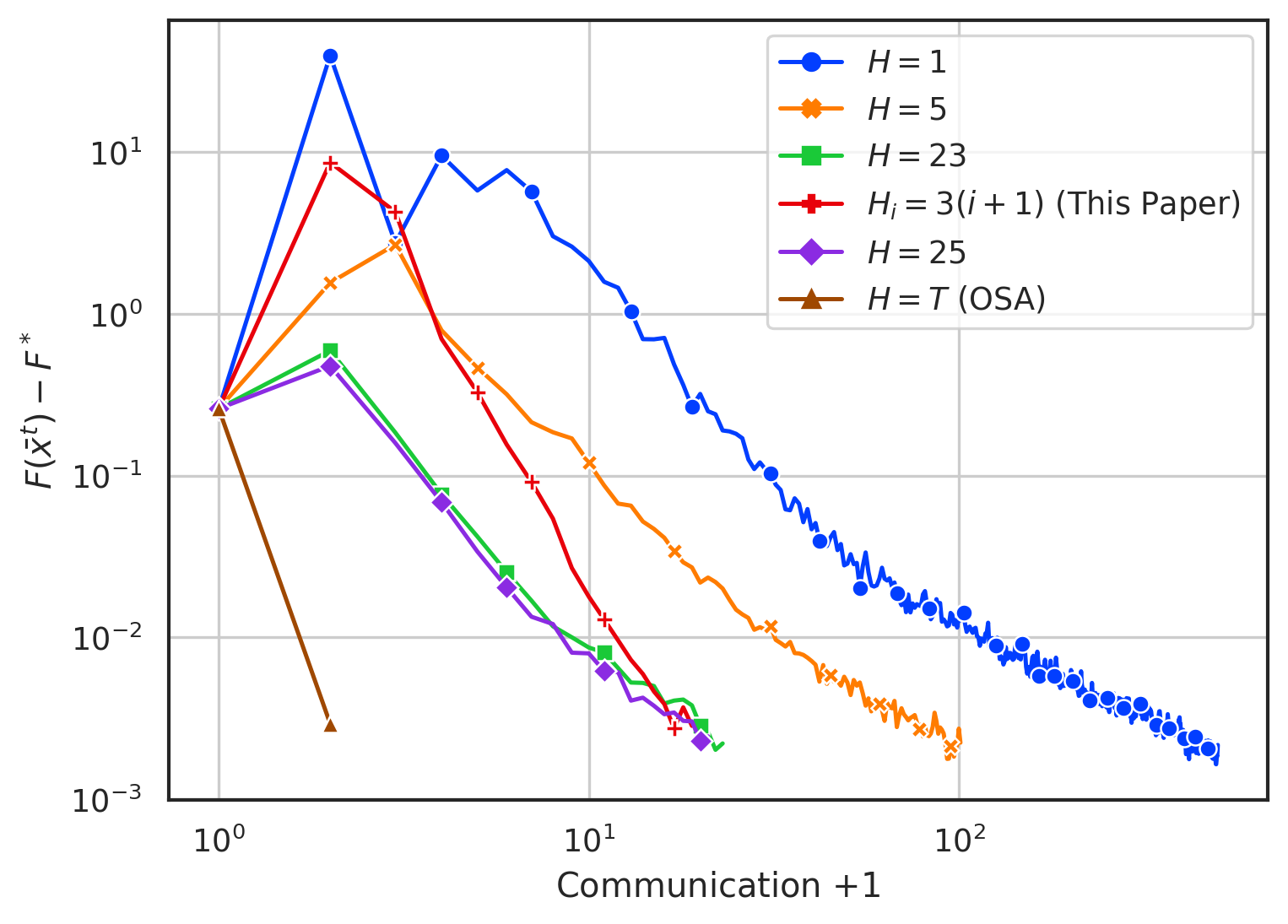}
		\caption{Error over communication rounds.}
	\end{subfigure}
	\caption{Local SGD with different communication strategies on the a9a data set.}
	\label{fig: res3}
\end{figure}

\subsection{Discussion}
We note that other communication strategies proposed in related works have often suggested their own step-size sequence or sometimes a fixed step-size. Designing an experiment to have a completely fair comparison of different methods while capturing all possible applications is not easy, if possible at all. Therefore, to make our comparison more fair, we used the same step-size sequence with $\beta = 1$ for all methods in each experiment.
Moreover, the central goal of the numerical experiments here, is to demonstrate the effectiveness of our suggested communication strategy, using minimal number of communication rounds.

\end{document}